\newtheorem{theorem}{Theorem}[section]
\newtheorem{lemma}[theorem]{Lemma}
\newtheorem{conjecture}[theorem]{Conjecture}
\newtheorem{corollary}[theorem]{Corollary}
\newtheorem{proposition}[theorem]{Proposition}
\theoremstyle{definition}
\newtheorem{definition}[theorem]{Definition}
\theoremstyle{remark}
\newtheorem{remark}[theorem]{Remark}
\newtheorem{remarks}[theorem]{Remarks}
\numberwithin{equation}{section}
\def\wt{\widetilde}
\def\({$($}
\def\){$)$}
\def\chit{\chi_{\rm top}}
\def\bbp{\mathbb P}
\def\Pic{\text{{\rm Pic\,}}}
\begin{document}

\title[Minimal number of singular fibers with non-compact Jacobians]
{On the minimal number of singular fibers with non-compact Jacobians for families of curves over $\bbp^1$}

%    Information for first author
\author{Xin Lu}
%    Address of record for the research reported here
\address{Department of Mathematics, Shanghai Key Laboratory of PMMP, East China Normal University, Dongchuan RD 500, Shanghai 200241, P. R. of China}
\curraddr{Institut f\"ur Mathematik, Universit\"at Mainz, Mainz, 55099, Germany}

\email{lvxinwillv@gmail.com}
%    \thanks will become a 1st page footnote.
\thanks{This work is supported by SFB/Transregio 45 Periods, Moduli Spaces and Arithmetic of Algebraic Varieties of the DFG (Deutsche Forschungsgemeinschaft), and by
 National Key Basic Research Program of China (Grant No. 2013CB834202) and NSF of China (Grant No. 11231003).}

%    Information for second named author
\author{Sheng-Li Tan}
\address{Department of Mathematics, East China Normal University, Dongchuan RD 500, Shanghai 200241, P. R. of China}
\email{sltan@math.ecnu.edu.cn}

%    Information for third named author
\author{Wan-Yuan Xu}
\address{Department of Mathematics, Shanghai Key Laboratory of PMMP, East China Normal University, Dongchuan RD 500, Shanghai 200241, P. R. of China}
\curraddr{Emmy Noether Research Institute for Mathematics, Bar-Ilan University, Ramat Gan, 52900, Israel}
\email{xwydy1988@126.com}

%    Information for fourth named author
\author{Kang Zuo}
\address{Institut f\"ur Mathematik, Universit\"at Mainz, Mainz, 55099, Germany}
\email{zuok@uni-mainz.de}

%    General info
\subjclass[2010]{14D06, 14H10, 14J25}
%14J25 Special surfaces
%14D06 Fibrations, degenerations
%14H10 Families, moduli (algebraic)///curves

%\date{September 19, 2014}

%\dedicatory{This paper is dedicated to our advisors.}

\keywords{family of curves, singular fiber, non-compact Jacobian.}

\maketitle

\begin{abstract}
\addcontentsline{toc}{section}{Abstract}
Let $f:X \to \mathbb{P}^1$ be a non-isotrivial family of semi-stable curves of genus $g\geq 1$ defined over an algebraically closed field $k$
with $s_{nc}$ singular fibers whose Jacobians are non-compact.
We prove that $s_{nc}\geq 5$ if $k=\mathbb C$ and $g\geq 5$; we also prove that $s_{nc}\geq 4$ if ${\rm char}~k>0$ and the relative Jacobian of $f$ is non-smooth.
\end{abstract}

\section{Introduction}
Let $f:X \to \mathbb{P}^1$ be a non-isotrivial family of semi-stable curves of genus $g\geq 1$
defined over an algebraically closed field $k$.
Denote by $s$ the number of the singular fibers
and $s_{nc}$ the number of the singular fibers whose Jacobians are non-compact.
It is a classical problem to determine lower bounds for $s$ and $s_{nc}$ (cf. \cite{Be81, Sz81}).

When $k=\mathbb C$, it was first proved by Beauville (cf. \cite{Be81}) that $s\geq 4$.
He also conjectured that $s\geq 5$ if $g\geq 2$, which was confirmed by the second named author \cite{Tan95}
based on the strict canonical class inequality for a family of curves.
There exist examples with $s=5$ and $g=2$ or $3$.
It is conjectured that $s\geq 6$ if $g$ is sufficiently large.
This was confirmed by Tu, Zamora and the second named author in \cite{TTZ05}
under an additional assumption that the Kodaira dimension of $X$ is non-negative.

The lower bound for $s_{nc}$ is much more mysterious.
For a long time, it was only known that $s_{nc}\geq 4$ holds for $g=1$ by Beauville's result (cf. \cite{Be81}),
due to the simple fact that $s_{nc}=s$ in the case.
In \cite{V-Z04},  Viehweg and the last named author showed that $s_{nc}\geq4$ for all $g\geq 1$
by using deep theories such as Simpson's correspondence.
They also proved that if $s_{nc}=4$, then the relative Jacobian of $f$
is the universal family of abelian varieties over a Shimura curve,
which implies in particular that such a family is actually defined over some number field.
Very recently, there is another proof of $s_{nc}\geq4$ given in \cite{LTYZ} based on an inequality
for the Hodge number $h^{1,1}(X)$, which seems more natural.
Examples of semi-stable families over $\mathbb P^1$ with $s_{nc}=4$
exist only for $g=1$, $2$ and $3$ (cf. \cite{Be2,LTYZ,luzuo14}).
No such examples have been found for high genera.
In fact, it was conjectured by Viehweg and the last named author (cf. \cite{V-Z04}) that
\begin{conjecture}[$k=\mathbb C$]\label{conjectures_nc>=5}
$s_{nc}\geq5$ for $g\gg0$.
\end{conjecture}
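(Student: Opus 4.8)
\medskip
\noindent\emph{Proof strategy.} I would prove both statements by contradiction, the analysis taking place on $\mathbb{P}^1\setminus S$, where $S\subset\mathbb{P}^1$ is the set of points over which $f$ has a fibre with non-compact Jacobian; thus $|S|=s_{nc}$, the local system $R^1f_*\mathbb{Q}$ extends across $\mathbb{P}^1\setminus S$ with unipotent local monodromies along $S$, and we dispose of the logarithmic Higgs bundle $(\mathcal{E}\oplus\mathcal{E}^\vee,\theta)$ with $\mathcal{E}=f_*\omega_{X/\mathbb{P}^1}$ and $\theta\colon\mathcal{E}\to\mathcal{E}^\vee\otimes\Omega^1_{\mathbb{P}^1}(\log S)$ the Kodaira--Spencer map. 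Put $h=\deg\mathcal{E}$, $q=q(X)$, $p_g=p_g(X)$ and let $q_f$ be the rank of the fixed part of $R^1f_*\mathbb{Q}$. Two facts are needed. First, a cohomological one, valid over any algebraically closed field: the Leray spectral sequence for $f$ together with relative duality give $\chi(\mathcal{O}_X)=1-g+h$, hence $p_g=h-g+q$ with $0\le q\le q_f$, so that $p_g\ge 0$ forces $h\ge g-q_f$. Second, an Arakelov-type inequality $2h\le(g-q_f)(s_{nc}-2)$, obtained by splitting off the flat part of $(\mathcal{E}\oplus\mathcal{E}^\vee,\theta)$ --- which over $\mathbb{C}$ is the fixed part --- and taking determinants of $\theta$ on the ample complement. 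Combining the two forces $s_{nc}\ge 4$, recovering \cite{V-Z04,LTYZ}, and --- crucially --- shows that \emph{if $s_{nc}=4$ then every inequality above is an equality}: $p_g(X)=0$, $q=q_f$, $h=g-q_f$, and the Arakelov inequality is sharp.

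\medskip
\noindent So assume $s_{nc}=4$; first I would extract the resulting rigidity. By Fujita's decomposition $\mathcal{E}=\mathcal{A}\oplus\mathcal{O}_{\mathbb{P}^1}^{\oplus q_f}$ with $\mathcal{A}$ ample of rank and degree $g-q_f$; the sharp Arakelov inequality makes $\theta|_{\mathcal{A}}$ an isomorphism $\mathcal{A}\xrightarrow{\ \sim\ }\mathcal{A}^\vee\otimes\Omega^1_{\mathbb{P}^1}(\log S)=\mathcal{A}^\vee(2)$, which for an ample bundle on $\mathbb{P}^1$ with rank $=$ degree $=g-q_f$ forces $\mathcal{A}=\mathcal{O}_{\mathbb{P}^1}(1)^{\oplus(g-q_f)}$, i.e. $\mathcal{E}=\mathcal{O}_{\mathbb{P}^1}(1)^{\oplus(g-q_f)}\oplus\mathcal{O}_{\mathbb{P}^1}^{\oplus q_f}$. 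Since $\theta|_{\mathcal{A}}$ is an isomorphism of bundles, its residue $\mathrm{Res}_{p}\theta$ has rank exactly $g-q_f$ at each $p\in S$; this rank is the toric rank of the corresponding semi-abelian degeneration, hence at most the number of non-separating nodes of that fibre, so the total number of nodes satisfies $\delta\ge 4(g-q_f)$. Over $\mathbb{C}$, sharpness of the Arakelov inequality is the maximal-Higgs condition, so by \cite{V-Z04} the relative Jacobian of $f$ becomes, after a finite étale base change $\varphi\colon U'\to\mathbb{P}^1\setminus S$ (again a four-punctured line), the pull-back of the universal family over a Shimura curve; as $f$ has fibres with non-compact Jacobians this Shimura curve cannot be compact (a compact one would produce an abelian scheme over $\mathbb{P}^1$, hence an isotrivial $f$), so it is a modular curve. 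Hence the new part of $\varphi^*R^1f_*\mathbb{Q}$ is a sum of tensor products $\mathbb{L}\otimes\mathbb{W}_k$ with $\mathbb{L}$ the rank-two uniformising variation of the modular curve and $\mathbb{W}_k$ unitary of finite monodromy, so the Jacobians in the family are, up to isogeny and a constant summand, assembled from the universal elliptic curve and CM factors.

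\medskip
\noindent The final step --- excluding $s_{nc}=4$ once $g\ge 5$ --- is where the real work is, and I would press on two fronts. Numerically, the splitting $f_*\omega_{X/\mathbb{P}^1}=\mathcal{O}_{\mathbb{P}^1}(1)^{\oplus(g-q_f)}\oplus\mathcal{O}_{\mathbb{P}^1}^{\oplus q_f}$ is very restrictive, and I would feed it, together with $12h=K^2_{X/\mathbb{P}^1}+\delta$, $\delta\ge 4(g-q_f)$, the slope inequality, and the Clifford-/Castelnuovo-type bounds for the relative (pluri)canonical system (as in the work of Xiao and the second named author), against $\chi(\mathcal{O}_X)=1-q_f$ and $p_g(X)=0$. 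Structurally, the modular description realises $f$ as a cover --- over a genus-zero modular curve with exactly four cusps --- built from the universal elliptic curve, so a Coleman--Oort-type analysis (the relevant Shimura/Teichm\"uller curves being classifiable in this very special situation) together with the theory of families of curves with (nearly) completely decomposable Jacobians should leave only a handful of small genera, to be matched against the known examples, which all have $g\le 3$ (cf. \cite{Be2,LTYZ,luzuo14}). The hard part --- and the main obstacle --- is making the numerical front conclusive, i.e. converting ``a maximal-Higgs configuration over a four-punctured line carried by a variation of Jacobians'' into the explicit bound $g\le 4$: the obvious inequalities alone (Noether, slope) are not enough, and one must exploit the finer geometry of the relative canonical map and/or the modularity.

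\medskip
\noindent For the positive-characteristic statement the cohomological identity $\chi(\mathcal{O}_X)=1-g+h$, hence $p_g(X)=h-g+q$ with $q\le q_f$ (via a positive-characteristic Fujita-type decomposition), together with $p_g(X)\ge 0$, still hold, so everything reduces to the Arakelov-type inequality $2h\le(g-q_f)(s_{nc}-2)$. I would derive it from the logarithmic de Rham bundle $\mathcal{H}^1_{\mathrm{dR}}(X/\mathbb{P}^1)$ with its Gauss--Manin connection --- regular singular with nilpotent residues along $S$, hence of degree $0$ --- and the semistability of the associated graded logarithmic Higgs bundle in characteristic $p$; the assumption that the relative Jacobian be non-smooth is exactly what makes $S\ne\emptyset$ and lets the positivity argument controlling $h$ go through. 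With $p_g(X)\ge 0$ this yields $s_{nc}\ge 4$. I do not expect to reach $s_{nc}\ge 5$ here: the sharpening in the previous paragraph uses complex Hodge theory --- Simpson's correspondence and the Shimura-curve dichotomy of \cite{V-Z04} --- for which no positive-characteristic analogue is presently available.
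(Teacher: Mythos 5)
Your proposal correctly reproduces the known reduction (the Arakelov inequality giving $s_{nc}\geq 4$, and the consequences of $s_{nc}=4$: $p_g=0$, $\chi_f=\deg f_*\omega_{X/\mathbb{P}^1}=g-q$, maximal Higgs field, the modular/Shimura description of \cite{V-Z04}), but it stops exactly where the theorem begins: the exclusion of $s_{nc}=4$ for $g\geq 5$ is only sketched as ``two fronts,'' neither of which is carried out, and you yourself flag the numerical front as ``the main obstacle.'' That is a genuine gap --- the statement is not proved. Moreover, your diagnosis that ``the obvious inequalities alone (Noether, slope) are not enough'' is what the paper refutes: the numerical front \emph{is} conclusive, provided one uses the refined forms of both inequalities rather than the crude ones.

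Concretely, the paper closes the gap by playing two sharpened inequalities against each other. On one side, Moriwaki's slope inequality in the form \eqref{eqnpfofmain13}, which keeps track of the nodes of type $1$ and of higher type separately, with coefficients $\frac{3g-4}{g}$ and $\frac{7g-16}{g}$ on $\delta_1(\Upsilon)$ and $\delta_h(\Upsilon)$. On the other side, a Miyaoka-type strengthening of the canonical class inequality (Corollary \ref{corollarycanonical}), obtained by a base change of ramification index $e$ over $\Delta_{nc}$ and by inserting the elliptic tails of the compact-Jacobian fibers as the boundary divisor $D$; this produces an upper bound for $K_f^2$ with coefficients $2$ and $3$ on $\delta_1(\Upsilon_c)$ and $\delta_h(\Upsilon_c)$ plus correction terms $\frac{3\delta(\Upsilon_{nc})}{e^2}-\frac{8(g-1)}{e}$. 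Subtracting, the coefficient of $\delta_1+4\delta_h$ is $\frac{g-4}{g}>0$ for $g\geq 5$, and with $\chi_f=g-q$, $q\leq 1$ (from \cite{LTYZ}), the case $q=0$ is killed by letting $e\to\infty$; the case $q=1$ additionally needs $s\geq 6$ (from \cite{Yu10}) to force $\delta_1(\Upsilon)+\delta_h(\Upsilon)\geq 2$, the bound $K_f^2\geq 5g-6+q$ of Lemma \ref{lemmapfofmain11} to control $\delta(\Upsilon_{nc})\leq 12\chi_f-K_f^2\leq 7(g-1)$, and the specific choice $e=g$. None of the structural input you propose for the final step (Coleman--Oort, decomposable Jacobians, classification of Teichm\"uller/Shimura curves) is used or needed; conversely, without the $\delta_1$/$\delta_h$ refinements and the $e$-parameter your numerical data ($h=g-q_f$, $p_g=0$, $\delta\geq 4(g-q_f)$, Noether) do not yield a contradiction for any $g$.
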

When $X$ is regular (i.e., $q:=h^1(X,\mathcal {O}_X)=0$),
the above conjecture was confirmed by Kukulies \cite{Ku10},
based on Viehweg-Zuo's characterization of such families and Sato-Tate's conjecture for modular curves;
see also \cite{luzuo14} for an alternate proof using surface theory.

The first main result of our paper is to prove the above conjecture.
\begin{theorem}[$k=\mathbb C$]\label{mainthm1}
Conjecture {\rm\ref{conjectures_nc>=5}} is true.
More precisely, $s_{nc}\geq5$ if $g\geq5$.
\end{theorem}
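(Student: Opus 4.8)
The plan is to argue by contradiction: by the theorem of Viehweg and the last named author we already know $s_{nc}\ge 4$, so it suffices to exclude $s_{nc}=4$, and the idea is that this case pins down the invariants of the total space $X$ so rigidly that for $g\ge 5$ no non-isotrivial semistable family over $\bbp^1$ can realise them.

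First I would invoke the Viehweg--Zuo structure theorem for $s_{nc}=4$: the relative Jacobian $J\to\bbp^1$ is then semi-abelian, with non-trivial torus part exactly over the reduced divisor $\Delta_{nc}$ of the four bad points, and is pulled back from the universal abelian variety over a Shimura curve, so the $\mathbb Q$-variation $\mathbb V:=R^1f_*\mathbb Q$ has maximal Higgs field. Since the family is semistable, the local monodromy at each of the four points is unipotent; hence every unitary constituent of $\mathbb V$ has unipotent and semisimple, i.e. trivial, local monodromy there, so it extends to a local system on the simply connected $\bbp^1$ and is constant. Using this together with the description of a maximal-Higgs variation, I would produce a splitting
\[
\mathbb V=\mathbb V_0\oplus\mathbb L^{\oplus r},\qquad 1\le r\le g,
\]
where $\mathbb V_0$ is a constant polarized weight-one Hodge structure of rank $2(g-r)$ (a fixed abelian variety $B_0$ of dimension $g-r$ inside $J$) and $\mathbb L$ is a rank-two weight-one VHS on $\bbp^1\setminus\Delta_{nc}$ with maximal Higgs field and unipotent non-trivial local monodromy at each of the four points. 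Up to isogeny $\mathbb L$ is the $H^1$ of a relatively minimal rational elliptic surface $S\to\bbp^1$ with exactly four multiplicative fibers $I_{b_1},\dots,I_{b_4}$ ($\sum b_i=12$) and Mordell--Weil rank $0$, whose general fiber $E_t$ satisfies $\mathrm{Jac}(X_t)\sim B_0\times E_t^{\,r}$.

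Next I would read off the numerical consequences. For the $(1,0)$-piece $\mathcal L$ of the Deligne extension of $\mathbb L$, the maximal-Higgs condition gives $\mathcal L\cong\mathcal L^{-1}\otimes\Omega^1_{\bbp^1}(\log\Delta_{nc})$, hence $2\deg\mathcal L=\deg\Omega^1_{\bbp^1}(\log\Delta_{nc})=s_{nc}-2=2$ and $\mathcal L\cong\calo_{\bbp^1}(1)$. Therefore
\[
f_*\omega_{X/\bbp^1}\cong\calo_{\bbp^1}^{\oplus(g-r)}\oplus\calo_{\bbp^1}(1)^{\oplus r},
\]
and, using $R^1f_*\calo_X\cong(f_*\omega_{X/\bbp^1})^\vee$ and the Leray spectral sequence, one gets $\deg f_*\omega_{X/\bbp^1}=r$, $p_g(X)=0$, $q(X)=g-r$ and $\chi(\calo_X)=1-g+r$. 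Moreover, at each of the four non-compact fibers $F_i$ the local monodromy acts as a maximal parabolic on every $\mathbb L$-summand and trivially on $\mathbb V_0$, so the first Betti number of the dual graph of $F_i$ is exactly $r$; in particular $F_i$ has at least $r$ nodes, so $\sum_i\delta_i\ge 4r+(s-4)$.

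It remains to derive a contradiction for $g\ge 5$, and this is where the real work lies. If $r=g$ then $q(X)=0$, i.e. $X$ is regular, and for regular $X$ the bound $s_{nc}\ge 5$ (when $g\ge 5$) is already known by \cite{Ku10} (see also \cite{luzuo14}). If $1\le r\le g-1$, then $\chi(\calo_X)=1-g+r\le 0$, so by the Enriques--Kodaira classification $X$ is, up to blow-ups, ruled over a smooth curve of genus $g-r$, or has Kodaira dimension at most $1$ with $q(X)=1$; in each subcase the fibers $X_t$ admit a non-constant morphism to a fixed curve through which $B_0$ is realised, and I would combine the Riemann--Hurwitz constraint for that morphism, the Noether relation $12r=K_{X/\bbp^1}^2+\sum_i\delta_i$, the slope inequality $K_{X/\bbp^1}^2\ge\frac{4(g-1)}{g}r$ together with its strict form, the estimate $\sum_i\delta_i\ge 4r+(s-4)$, and the classification of extremal rational elliptic surfaces with four multiplicative fibers to force a numerical impossibility once $g\ge 5$. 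The main obstacle is exactly this: the rigid Viehweg--Zuo picture of the second paragraph does not, on its own, contradict the mere \emph{existence} of such an $X$ for small $r$, so ruling it out requires welding together the Hodge-theoretic rigidity, the fine geometry of the four-cusped modular base, the Riemann--Hurwitz/Prym-type restrictions on the fibers, and the surface inequalities into an argument that degenerates precisely at $g\le 4$ --- matching the fact that the known examples with $s_{nc}=4$ exist only for $g\le 3$.
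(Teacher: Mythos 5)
Your proposal is a strategy outline rather than a proof: the decisive step is missing. After setting up the Viehweg--Zuo decomposition $\mathbb V=\mathbb V_0\oplus\mathbb L^{\oplus r}$ and computing $\chi_f=r$, $q=g-r$, you dispose of $r=g$ by citing Kukulies, but for $1\le r\le g-1$ you only list tools you ``would combine'' and concede that the rigid structure ``does not, on its own, contradict the mere existence of such an $X$ for small $r$.'' That concession is accurate, and the tools you name do not close the gap: from $12r=K_f^2+\delta_f$ and $\delta_f\ge 4r+(s-4)$ you get $K_f^2\le 8r$, while the slope inequality gives $K_f^2\ge \frac{4(g-1)}{g}r$; these are perfectly compatible, so no numerical impossibility follows. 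The ingredient you are missing is an \emph{upper} bound on $K_f^2$ of canonical-class type. The paper derives one from Miyaoka's inequality via a base change of ramification index $e$ over $\Delta_{nc}$, namely
\begin{equation*}
K^2_f\leq 4(g-1)+2\delta_1(\Upsilon_{ct})+3\delta_h(\Upsilon_{ct})+\frac{3\delta(\Upsilon_{nc})}{e^2}-\frac{8(g-1)}{e},
\end{equation*}
and plays it against Moriwaki's slope inequality \emph{with the correction terms} $\frac{3g-4}{g}\delta_1(\Upsilon)+\frac{7g-16}{g}\delta_h(\Upsilon)$; it is precisely the coefficient comparison between $2\delta_1+3\delta_h$ and these correction terms that produces the factor $\frac{g-4}{g}$ and makes the argument work exactly for $g\ge 5$.

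Two further points. First, the paper replaces your case division over all $r$ by the much stronger input $q\le 1$ (hence $r\ge g-1$) from \cite{LTYZ}, together with $\chi_f=g-q$; without something of this kind your ``small $r$'' cases proliferate and your appeal to the Enriques--Kodaira classification and Prym-type restrictions is not an argument one can check. Second, even with $q\le 1$ the case $q=1$ needs extra work: the paper uses Yu's bound $s\ge 6$ to get $\delta_1(\Upsilon)+\delta_h(\Upsilon)\ge 2$, upgrades the lower bound to $K_f^2\ge 5g-6+q$ via a Riemann--Roch/vanishing lemma, bounds $\delta(\Upsilon_{nc})\le 7(g-1)$ by Noether's formula, and then takes $e=g$ in the displayed inequality. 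None of these steps appears in your sketch, so as written the case $1\le r\le g-1$ (in particular $r=g-1$) is genuinely open in your argument.
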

\begin{remarks}
(i). Since $s\geq s_{nc}$, our result implies Beauville's conjecture for $g\geq 5$,
which was proved earlier by the second named author \cite{Tan95}.

(ii). If one removes the assumption on the semi-stability of $f$,
then it is only known that $s\geq s_{nc}\geq 3$ (cf. \cite{Be81}).
Even if we require two of the singular fibers to be semi-stable,
there still exist infinitely many examples with exactly three singular fibers (cf. \cite{glt13}).
\end{remarks}

Our next purpose is to consider the similar question in the positive characteristic case.
When $k$ is of characteristic $p>0$,
Szpiro \cite{Sz81} first proved that $s\geq 3$ and he also showed that $s\geq 4$ under the assumption that $X$ is of general type.
Later, Nguyen proved in \cite{Ngu98} that $s\geq4$ without any assumption on the total surface $X$,
which is an analogue of the Beauville's result (cf. \cite{Be81}) in the positive characteristic case.
Instead of finding the lower bound on $s$, we prove a lower bound on $s_{nc}$ if the relative Jacobian of $f$ is non-smooth over $\mathbb P^1$.
More precisely,
\begin{theorem}[char $k>0$]\label{thm2}
Assume that the relative Jacobian of $f$ is non-smooth. Then $s_{nc}\geq4$.
\end{theorem}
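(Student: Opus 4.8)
The plan is to run a positive-characteristic version of the numerical argument underlying the bound $s_{nc}\geq 4$ over $\mathbb{C}$ (see \cite{V-Z04}, and the more elementary approach of \cite{LTYZ} via $h^{1,1}(X)$), using the hypothesis on the relative Jacobian precisely to bypass the two points where characteristic $p$ would otherwise break such an argument.

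Let $U\subseteq\mathbb{P}^1$ be the locus over which $f$ is smooth, $\Delta=\mathbb{P}^1\setminus U$, and $\Delta_{nc}\subseteq\Delta$ the $s_{nc}$ points at which the N\'eron model of the relative Jacobian acquires a non-trivial torus part; for $p\in\Delta_{nc}$ write $t_p\geq 1$ for that toric rank. By definition the relative Jacobian is non-smooth precisely when $\mathrm{Pic}^0_{X/\mathbb{P}^1}$ is not an abelian scheme over $\mathbb{P}^1$, i.e. $\Delta_{nc}\neq\emptyset$; assume for contradiction that $1\leq s_{nc}\leq 3$. The first consequence of the hypothesis I would extract is positivity of the Hodge bundle: since every fiber over a point of $\Delta_{nc}$ carries a non-separating node, the total number $\delta=\sum_p\delta_p$ of nodes of the singular fibers is positive, so Noether's formula $\omega_{X/\mathbb{P}^1}^2=12\deg f_*\omega_{X/\mathbb{P}^1}-\delta$ for the relatively minimal semistable fibration, together with the nefness of $\omega_{X/\mathbb{P}^1}$ (which holds in any characteristic in this setting), forces $\deg f_*\omega_{X/\mathbb{P}^1}\geq\delta/12>0$; this substitutes for the implication ``non-isotrivial $\Rightarrow$ positive Hodge degree'' that fails in characteristic $p$. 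The second consequence is that a non-trivial torus part at a point of $\Delta_{nc}$ makes the local $\ell$-adic monodromy of $R^1f_*\mathbb{Q}_\ell$ ($\ell\neq p$) non-trivial there, so the geometric monodromy of the family is non-trivial and the fixed part $\mathrm{Pic}^0(X)$ has dimension $<g$; this excludes the characteristic-$p$ phenomenon of a non-isotrivial family with geometrically constant cohomology, such as a Frobenius pull-back of an isotrivial one.

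With these two inputs I would estimate as over $\mathbb{C}$. On one hand, the general fiber class together with, for each reducible singular fiber $X_p$, the classes of all but one of its $c_p$ irreducible components span $1+\sum_p(c_p-1)$ independent classes in $\mathrm{NS}(X)$; since a fiber with compact Jacobian contributes exactly its number of nodes while a fiber $p\in\Delta_{nc}$ contributes $\delta_p-t_p$, this gives $\rho(X)\geq 1+\delta-t$ with $t:=\sum_{p\in\Delta_{nc}}t_p$. On the other hand, Noether's formula yields $b_2(X)=\delta-4g+2+2b_1(X)$, and $\rho(X)\leq b_2(X)$ then gives $t\geq 4g-1-2b_1(X)$. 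Combining with $t\leq g\cdot s_{nc}$ (each $t_p\leq g$) one obtains $s_{nc}\geq 4-\tfrac{1+2b_1(X)}{g}$, hence $s_{nc}>3$ once $b_1(X)$ has been controlled; the case $g=1$ is separate, since then $s_{nc}=s$ and $s\geq 4$ by \cite{Ngu98}.

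The crux is not the shape of this inequality but checking that every ingredient survives in characteristic $p$, and in particular handling the fixed part. The bound $\rho(X)\leq b_2(X)$ and the fiber-component estimate for $\rho(X)$ hold verbatim via $\ell$-adic cohomology and the cycle class map, and $\mathrm{Pic}^0$ of a nodal curve is still smooth, so the N\'eron-model picture over $\mathbb{P}^1$ is unchanged; the delicate point is that when $b_1(X)=2\dim\mathrm{Pic}^0(X)>0$ one must split off the constant isogeny factor of the relative Jacobian (as in \cite{LTYZ}) while allowing $\mathrm{Pic}^0(X)$ to be non-reduced, so Poincar\'e reducibility has to be run with a possibly non-reduced group scheme and the extra N\'eron--Severi classes produced by the fixed part must be accounted for. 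Establishing this, and confirming that the hypothesis forces non-trivial \emph{geometric} monodromy rather than merely a non-smooth N\'eron model, is where the argument must go beyond transcribing the characteristic-zero proof.
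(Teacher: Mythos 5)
Your numerical skeleton is the right one and matches the paper's: the identity $b_2=\delta_f-4g+2+2b_1$ from Noether's formula, the Beauville-type lower bound on $\rho(X)$ from fiber components, $\rho\leq b_2$ via Igusa, and the observation that each $F\in\Upsilon_{nc}$ contributes $l(F)-1=\delta(F)-t_F$ with toric rank $t_F=g-g(F)>0$. But there is a genuine gap exactly where you flag one: your final inequality $s_{nc}\geq 4-\tfrac{1+2b_1(X)}{g}$ comes from the crude bound $t_F\leq g$, and it does not yield $s_{nc}\geq 4$ unless $b_1(X)=0$. You propose to ``control $b_1$'' by splitting off the fixed part of the relative Jacobian via Poincar\'e reducibility with a possibly non-reduced $\Pic^0(X)$; this is left entirely unexecuted, and it is also not needed.

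The missing ingredient is the paper's Lemma 4.3: for \emph{every} fiber $F$ one has $g(F)\geq \tfrac{b_1(X)}{2}=\dim\mathrm{Alb}(X)$. The proof is short and avoids all the delicate points you raise: the composite $J(\wt F)\to\mathrm{Alb}(X)\to A:=\mathrm{Alb}(X)/\mathrm{Im}\,\beta$ kills $F$, so by rigidity the map $X\to A$ factors through $\bbp^1$; since an abelian variety contains no rational curves, $A=0$, hence $J(\wt F)\twoheadrightarrow\mathrm{Alb}(X)$ and $g(F)\geq b_1/2$. (Note the paper only ever uses $b_1/2=\dim\mathrm{Alb}(X)$, never $q=h^1(\calo_X)$, so the non-reducedness of $\Pic^0(X)$ in characteristic $p$ is irrelevant.) This lemma upgrades your estimate $t_F\leq g$ to $t_F=g-g(F)\leq g-\tfrac{b_1}{2}$, and combined with $\rho\geq 2+\sum_F(l(F)-1)$ (you dropped the horizontal class and only got $1+\cdots$) the $b_1$ terms cancel exactly: one gets $\bigl(g-\tfrac{b_1}{2}\bigr)(s_{nc}-4)\geq\sum_{F\in\Upsilon_{nc}}\bigl(g(F)-\tfrac{b_1}{2}\bigr)\geq 0$, and since $\Upsilon_{nc}\neq\emptyset$ forces $g>g(F)\geq\tfrac{b_1}{2}$ for some $F$, the factor $g-\tfrac{b_1}{2}$ is strictly positive and $s_{nc}\geq 4$ follows for all $g\geq 1$, with no separate case for $g=1$ and no monodromy or Hodge-bundle positivity input. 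Without this lemma (or an executed substitute for it) your argument does not close when $b_1(X)>0$.
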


\begin{remarks}
(i). Similarly, the above theorem implies Nguyen's result $s\geq 4$ (cf. \cite{Ngu98}) if the relative Jacobian of $f$ is non-smooth due to the fact that $s\geq s_{nc}$.

(ii). The assumption that the relative Jacobian of $f$ is non-smooth is crucial in our theorem.
Actually, Moret-Bailly \cite{M-B} constructed a semi-stable family of genus 2 curves over $\mathbb{P}^1$ defined over $k$ with $s_{nc}=0$,
i.e., the relative Jacobian of $f$ is a smooth family of abelian surfaces.

(iii). If $s_{nc}$ is indeed equal to $4$, $X$ would be very special. Actually, the numerical data of the surface $X$ is obtained in Section \ref{sectionthm2}.
\end{remarks}

Our paper is organized as follows. In Section \ref{sectionpre}, we review some preliminaries about families of curves.
The main results, Theorems \ref{mainthm1} and \ref{thm2}, are proved in Sections \ref{sectionthm1} and \ref{sectionthm2} respectively.

\section{Preliminaries}\label{sectionpre}
In this section, we would like to review some basic facts on families of semi-stable curves
and fix the notations which will be used freely. For more details we refer to \cite{Ba01,bhpv,Se58a}.

Let $f:X \to B$ be a relatively minimal fibration of genus $g\geq 1$ defined over an algebraic closed field $k$,
i.e., there is no $(-1)$-curve contained in fibers of $f$.
A fiber $F$ is called semi-stable (resp. stable) if it is a reduced nodal curve,
and every smooth rational component intersects the rest part of $F$ at least two (resp. three) points.
$f$ is said to be semi-stable if all singular fibers of $f$ are semi-stable,
and isotrivial if $f$ becomes trivial after a base change.
Unless stated otherwise, the total surface $X$ is supposed to be smooth.

Let $K_X$ be the canonical divisor of $X$,
$\omega_{X/B}=\omega_X\otimes f^*\omega^{\vee}_{B}$ the relative canonical sheaf,
$\chi(\mathcal {O}_X)=\sum(-1)^i h^i(X, \mathcal O_X)$ the Euler characteristic of the structure sheaf,
and $\chit(X)=\sum(-1)^i b_i$ the topological Euler characteristic of $X$, where $b_i$'s are the Betti numbers of $X$.
We should mention that, for an arbitrary field $k$, the Betti numbers are defined by  $l$-adic cohomology, i.e., $b_i(X)=\dim H_{\acute{e}t}^i(X,\mathbb{Q}_l)$ (here $l\neq \text{char~} k$);
and for $k=\mathbb{C}$, the Betti numbers defined by  $l$-adic cohomology coincide with that given by singular cohomology.
Moreover, one has $b_i=b_{4-i}$ by Poincar\'e duality theorem.
The relative invariants of $f$ are defined as follows:
\begin{equation}\label{eqnrelativeinvs1}
\chi_f:=\deg f_*\omega_{X/B},\qquad
K_f^2:=\omega^2_{X/B},\qquad\delta_f:=\sum_{F \text{~is singular}} \delta(F),
\end{equation}
where
$$\delta(F)=\chit(F)-(2-2g).$$
Let $b=g(B)$ the genus of $B$. It is well-known that
\begin{equation}\label{eqnrelativeinvs2}
\begin{cases}
\chi_f&=~\chi(\mathcal {O}_X)-(g-1)(b-1),\\[0.1cm]
K_f^2&=~K_X^2-8(g-1)(b-1),\\[0.1cm]
\delta_f&=~\chit(X)-4(g-1)(b-1).
\end{cases}
\end{equation}
These relative invariants are all non-negative.
%And $\delta_f=0$ iff $f$ is smooth;
%$\chi_f=0$ (equivalently $K_f^2=0$) iff $f$ is smooth and isotrivial.
They also satisfy the Noether's formula
\begin{equation}\label{eqnnoetherformula}
12\chi_f=K_f^2+\delta_f.
\end{equation}

Note that when $f$ is semi-stable,
$\delta(F)$ is nothing but the number of nodes contained in $F$ for any singular fiber of $f$.
From now till the end of the section, we assume that $f$ is semi-stable,
and would like to give a more concrete description of the nodes of a singular fiber $F$.
\begin{definition}\label{deftypeinodeinF}
(i). Assume $F=\sum\limits_{i=1}^{l(F)} \Gamma_i$, and $\wt \Gamma_i$ is the normalization of $\Gamma_i$.
Then the geometrical genus of $F$ is defined to be $g(F)=\sum\limits_{i=1}^{l(F)} g(\wt \Gamma_i)$.

(ii). Let $q\in F$ be a node of $F$.
Then $q$ is said to be  of type $i\in [1,  g/2]$ (resp. 0) if the
partial normalization of $F$ at $q$ consists of two connected
components of arithmetic genera $i$ and $g-i$ (resp. is connected).
Denote by $\delta_i(F)$ the number of nodes of type $i$ in $F$.
\end{definition}

\begin{lemma}
For any semi-stable fiber $F=\sum\limits_{i=1}^{l(F)} \Gamma_i$, one has
\begin{equation}\label{eqndelta(F)=}
\delta(F)=g-g(F)+l(F)-1.
\end{equation}
\end{lemma}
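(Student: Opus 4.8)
The plan is to compare $F$ with its normalization $\pi\colon \wt F=\bigsqcup_{i=1}^{l(F)}\wt\Gamma_i\to F$. Since $F$ is a fiber of the (connected) fibration $f$, it is connected, and being semi-stable it is a reduced curve whose only singularities are the $\delta(F)$ nodes; moreover flatness of $f$ gives $p_a(F)=g$, hence $\chi(\mathcal O_F)=1-g$.

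First I would write down the conductor exact sequence $0\to\mathcal O_F\to\pi_*\mathcal O_{\wt F}\to\bigoplus_q k_q\to 0$, the sum ranging over the nodes $q$ of $F$; the quotient sheaf has length exactly one at each node, because locally $\mathcal O_{F,q}\cong k[[x,y]]/(xy)$ embeds into $k[[x]]\times k[[y]]$ with $1$-dimensional cokernel. Since $\pi$ is finite, $\chi(\pi_*\mathcal O_{\wt F})=\chi(\mathcal O_{\wt F})=\sum_{i=1}^{l(F)}\bigl(1-g(\wt\Gamma_i)\bigr)=l(F)-g(F)$. Taking Euler characteristics in the exact sequence then gives
\begin{equation*}
1-g=\chi(\mathcal O_F)=\chi(\pi_*\mathcal O_{\wt F})-\delta(F)=l(F)-g(F)-\delta(F),
\end{equation*}
which is exactly \eqref{eqndelta(F)=}.

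Alternatively, and more in the spirit of the definition $\delta(F)=\chit(F)-(2-2g)$ used above, one can argue topologically: $\wt F$ is a disjoint union of smooth curves, so $\chit(\wt F)=\sum_{i}\bigl(2-2g(\wt\Gamma_i)\bigr)=2l(F)-2g(F)$, while passing from $\wt F$ to $F$ glues the two preimages of each node to a single point, so by additivity of the $l$-adic Euler characteristic over the stratification of $F$ by its nodes (valid in every characteristic) one has $\chit(F)=\chit(\wt F)-\delta(F)$; substituting this into the definition of $\delta(F)$ yields the same identity. There is no serious obstacle in this lemma; the only point requiring a moment's care is the local verification that each node contributes length one to the conductor (equivalently, drops the Euler characteristic by one), which is immediate from the normal form of an ordinary double point.
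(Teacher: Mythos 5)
Your proof is correct, but it takes a genuinely different route from the paper's. The paper works on the ambient surface: it expresses $p_a(\Gamma_i)$ by adjunction as $1+\frac{1}{2}(\Gamma_i^2+K_X\cdot\Gamma_i)$, writes the node count of $F$ as $\sum_{i<j}\Gamma_i\cdot\Gamma_j+\sum_i n(\Gamma_i)$, and then eliminates the intersection numbers using $F^2=0$ and $K_X\cdot F=2g-2$. You instead argue intrinsically on the curve $F$, comparing $\chi(\mathcal O_F)=1-g$ with $\chi(\mathcal O_{\wt F})=l(F)-g(F)$ via the conductor sequence; this never touches the surface $X$ and is visibly characteristic-free, which is relevant since the lemma is used again in Section \ref{sectionthm2} in positive characteristic. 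One point worth making explicit: both of your arguments, like the paper's, use the identification of $\delta(F)$ (defined as $\chit(F)-(2-2g)$) with the number $\nu$ of nodes of $F$, which the paper asserts without proof just before the lemma. Your two computations taken together actually supply that identification: the conductor sequence gives $\nu=g-g(F)+l(F)-1$, the stratification gives $\chit(F)=2l(F)-2g(F)-\nu$, and combining them yields $\delta(F)=\chit(F)-(2-2g)=2\bigl(g-g(F)+l(F)-1\bigr)-\nu=\nu$, proving both the asserted identification and the formula at once. So the proposal is not only correct but slightly more self-contained than the paper's argument.
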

\begin{proof}
Let $\wt \Gamma_i$ be the normalization of $\Gamma_i$, and $p_a(\Gamma_i)$ \big(resp. $n(\Gamma_i)$\big) be the arithmetic genus (resp. the number of nodes) of $\Gamma_i$. Then
\begin{equation*}
\begin{aligned}
&g(F)=\sum_{i=1}^{l(F)}g(\widetilde{\Gamma}_i)=\sum_{i=1}^{l(F)}\big(p_a(\Gamma_i)-n(\Gamma_i)\big),&\quad&
p_a(\Gamma_i)=1+\frac{\Gamma_i^2+K_X\cdot\Gamma_i}{2},\\
&\delta(F)=\sum_{i<j}\Gamma_i\cdot \Gamma_j + \sum_{i}^{l(F)} n(\Gamma_i),&&
0=F^2=\sum_{i}^{l(F)} \Gamma_i^2 + 2\sum_{i<j}\Gamma_i\cdot \Gamma_j.
\end{aligned}
\end{equation*}
Combining the above equalities with the fact that $\sum\limits_{i}^{l(F)} K_X\cdot\Gamma_i=K_X \cdot F =2g-2$, one obtains \eqref{eqndelta(F)=}.
\end{proof}
Note that $F$ has a compact Jacobian if $g(F)=g$.
Hence by the above lemma,
$$\begin{aligned}
&\text{$F$ has a compact Jacobian,} &\Longleftrightarrow&\quad \delta(F)=l(F)-1,\qquad\\
\Longleftrightarrow\quad&\text{the dual graph of $F$ is a tree,} &\Longleftrightarrow&\quad \delta_0(F)=0.
\end{aligned}
$$

Let $\Upsilon \to \Delta$ (resp. $\Upsilon_{ct} \to \Delta_{ct}$, resp. $\Upsilon_{nc} \to \Delta_{nc}$)
be the singular fibers (resp. singular fibers with compact Jacobians, resp. singular fibers with non-compact Jacobians)
of $f$.
Define
$\delta_h(F)=\sum\limits_{i=2}^{[g/2]} \delta_i(F)$, and
\begin{equation}\label{formulaofdelta_i}
\left\{
\begin{aligned}
&\delta_i(\Upsilon)=\sum_{F\in \Upsilon} \delta_i(F),\quad
\delta_i(\Upsilon_{ct})=\sum_{F\in \Upsilon_{ct}} \delta_i(F),\quad
\delta_i(\Upsilon_{nc})=\sum_{F\in \Upsilon_{nc}} \delta_i(F).\\
&\delta_h(\Upsilon)=\sum_{i=2}^{[g/2]} \delta_i(\Upsilon),\quad
\delta_h(\Upsilon_{ct})=\sum_{i=2}^{[g/2]} \delta_i(\Upsilon_{ct}).
\end{aligned}
\right.
\end{equation}
Then
\begin{equation}\label{formulaofdelta_f}
\left\{
\begin{aligned}
\delta(F)&=\sum_{i=0}^{[g/2]}\delta_i(F)=\delta_0(F)+\delta_1(F)+\delta_h(F),\\
\delta_{f}&=\sum_{i=0}^{[g/2]}\delta_i(\Upsilon)=\delta_0(\Upsilon)+\delta_1(\Upsilon)+\delta_h(\Upsilon).
\end{aligned}
\right.
\end{equation}

\section{Lower bound for $s_{nc}$ when $k=\mathbb C$}\label{sectionthm1}
The section is devoted to proving Theorem \ref{mainthm1}.
We first recall the Miyaoka's inequality and canonical class inequality in Section \ref{sectionmiyaoka}.
The proof of Theorem \ref{mainthm1} is complete in Section \ref{sectionpfofthm1}.
\subsection{Miyaoka's inequality and canonical class inequality}\label{sectionmiyaoka}
We recall Miyaoka's inequality \cite{miyaoka84} as follows.
See also \cite{hirzebruch86} more details.
\begin{theorem}[{\cite[Corollary\,1.3]{miyaoka84}}]\label{theoremmiyaoka}
Let $X$ be a smooth surface over $\mathbb C$ such that the canonical divisor $K_X$ is numerically effective (nef),
$E_1,\cdots,E_n$ be disjoint ADE curves on $X$,
and $D$ be a reduced normal crossing curve on $X$ which does not intersect $E_i$ for each $i$.
Then
\begin{equation}\label{eqnmiyaoka}
\sum_{i=1}^n m(E_i) \leq 3\big(\chit(X)-\chit(D)\big)-(K_X+D)^2.
\end{equation}
where $m(E)$ is defined as follows,
$$\begin{aligned}
m(A_r)&\,=\,3(r+1)-\frac{3}{r+1},\\
m(D_r)&\,=\,3(r+1)-\frac{3}{4(r-2)},\quad\forall~r\geq4,\\
m(E_6)&\,=\,21-\frac{1}{8},\\
m(E_7)&\,=\,24-\frac{1}{16},\\
m(E_8)&\,=\,27-\frac{1}{40}.
\end{aligned}$$
\end{theorem}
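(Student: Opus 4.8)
The plan is to deduce the inequality from the logarithmic orbifold Bogomolov--Miyaoka--Yau inequality by contracting the $ADE$ configurations. First I would let $\pi\colon X\to Y$ be the birational morphism contracting each connected configuration $E_i$ to a single point $p_i$. Since contracting an $ADE$ configuration of $(-2)$-curves produces a Du Val singularity, $Y$ has at worst quotient singularities $p_i\cong\mathbb C^2/G_i$ with $G_i\subset SL_2(\mathbb C)$ a finite subgroup, and $\pi$ is crepant, so $\pi^*K_Y=K_X$. Because $D$ is disjoint from every $E_i$, its image $D_Y:=\pi(D)$ is a reduced normal crossing curve contained in the smooth locus $Y^{\mathrm{reg}}$ of $Y$, the restriction $\pi|_D\colon D\to D_Y$ is an isomorphism, and $\pi^*D_Y=D$. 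Hence $\pi^*(K_Y+D_Y)=K_X+D$, so $K_Y+D_Y$ is nef because $K_X+D$ is, and the projection formula gives $(K_Y+D_Y)^2=(K_X+D)^2$.

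The main input is then the orbifold version of Miyaoka's inequality for open surfaces applied to $(Y,D_Y)$: as $Y$ has only quotient singularities, $(Y,D_Y)$ is log canonical, and $K_Y+D_Y$ is nef, one has
\begin{equation*}
(K_Y+D_Y)^2\ \le\ 3\,e_{\mathrm{orb}}\bigl(Y\setminus D_Y\bigr),\qquad
e_{\mathrm{orb}}\bigl(Y\setminus D_Y\bigr)\ =\ \chit\bigl(Y^{\mathrm{reg}}\setminus D_Y\bigr)+\sum_{i=1}^{n}\frac{1}{|G_i|}.
\end{equation*}
This is the analytic core, resting on Miyaoka's semistability argument for the log cotangent sheaf $\Omega^1_Y(\log D_Y)$ together with the standard local Chern-number corrections at the quotient singularities; in the final write-up I would cite it and only need to check, as above, that the hypotheses translate.

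It remains to rewrite the right-hand side. Since $\pi$ induces an isomorphism $X\setminus\bigcup_{i}E_i\xrightarrow{\ \sim\ }Y^{\mathrm{reg}}$ and the configurations $E_i$ are pairwise disjoint, additivity of the topological Euler number yields $\chit(Y^{\mathrm{reg}}\setminus D_Y)=\chit(X)-\chit(D)-\sum_i\chit(E_i)$, whence
\begin{equation*}
3\,e_{\mathrm{orb}}\bigl(Y\setminus D_Y\bigr)\ =\ 3\bigl(\chit(X)-\chit(D)\bigr)-\sum_{i=1}^{n}\Bigl(3\chit(E_i)-\frac{3}{|G_i|}\Bigr).
\end{equation*}
Substituting this together with $(K_Y+D_Y)^2=(K_X+D)^2$ into the inequality above and rearranging gives exactly $\sum_i\bigl(3\chit(E_i)-3/|G_i|\bigr)\le 3(\chit(X)-\chit(D))-(K_X+D)^2$. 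The proof is then finished by checking the identity $m(E_i)=3\chit(E_i)-3/|G_i|$ for each type, which is a direct computation via the McKay correspondence: an $A_r$ configuration is a chain of $r$ rational curves, so $\chit(E_i)=r+1$ and $|G_i|=r+1$; a $D_r$ configuration is a tree of $r$ rational curves with $\chit(E_i)=r+1$ and $|G_i|=4(r-2)$; and the $E_6,E_7,E_8$ configurations have $\chit(E_i)=7,8,9$ and $|G_i|=24,48,120$. Inserting these into $3\chit(E_i)-3/|G_i|$ reproduces the listed values of $m(\cdot)$.

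The genuine obstacle is the orbifold/logarithmic Miyaoka--Yau inequality used in the second step; the rest is bookkeeping, made painless precisely by the disjointness of $D$ from the $E_i$ (which is what guarantees both that the contraction preserves nefness of the log canonical class and that $D_Y$ avoids the singular points of $Y$). An alternative, if one prefers to stay on the smooth surface $X$, is to run Miyaoka's semistability estimate directly with the parabolic/orbifold structure supported along the $E_i$ and the logarithmic structure along $D$ --- the same computation organized upstairs rather than downstairs.
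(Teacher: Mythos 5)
The paper offers no proof of this statement: it is quoted directly from Miyaoka \cite{miyaoka84} (with \cite{hirzebruch86} as a secondary reference), so there is no internal argument to compare yours against. Your sketch is the standard derivation of that cited result --- contract the ADE configurations crepantly to Du Val points, invoke the logarithmic/orbifold Bogomolov--Miyaoka--Yau inequality downstairs, and identify the local defect $3\chit(E)-3/|G|$ with $m(E)$ via the McKay correspondence --- and all of the bookkeeping (the Euler numbers $r+1$, $r+1$, $7$, $8$, $9$ of the Dynkin configurations and the orders $r+1$, $4(r-2)$, $24$, $48$, $120$ of the binary polyhedral groups) is correct. Be aware, though, that the ``main input'' you defer to is essentially Miyaoka's Theorem~1.1/Corollary~1.2 itself, i.e.\ the analytic content of the very statement being cited; as a reduction this is fine, but it is not an independent proof.

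There is one genuine gap in the reduction as written: the sentence ``$K_Y+D_Y$ is nef because $K_X+D$ is.'' Nefness of $K_X+D$ is not among the hypotheses --- only $K_X$ is assumed nef --- and it does not follow: if $D$ contains a smooth rational component $C$ meeting the rest of $D$ in at most one point, then $(K_X+D)\cdot C=2p_a(C)-2+(D-C)\cdot C<0$. This is not a hypothetical worry; in the paper's own application of the theorem (Corollary \ref{corollarycanonical}) the divisor $D$ consists of elliptic tails that may contain chains of $(-2)$-curves, for which $K+D$ fails to be nef. To prove the theorem under the stated hypothesis you must use Miyaoka's inequality in the form that applies to the positive part $P$ of the Zariski decomposition $K_Y+D_Y=P+N$ (which exists because $K_Y$ is nef and $D_Y$ is effective, so $K_Y+D_Y$ is pseudo-effective), and then conclude via $(K_Y+D_Y)^2=P^2+N^2\le P^2$. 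With that replacement your argument goes through.
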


Applying Miyaoka's inequality in the case of surfaces fibred by a family of semi-stable curves, one has the following
generalized canonical class inequality.
\begin{lemma}\label{lemmamiyaokafibersurface}
Let $f:\,X \to B$ be a non-isotrivial semi-stable fibration of genus $g\geq 2$
defined over $\mathbb C$ with singular locus $\Upsilon \to \Delta$,
and $D$ be a reduced normal crossing curve on $X$.
Assume that $\Delta=\Delta_1 \cup \Delta_2$ with $\Upsilon_i=f^{-1}(\Delta_i)$, and $D \subseteq \Upsilon_2$.
Then for any $e\in \mathbb N^{*}$ such that $2g(B)-2+\frac{e-1}{e}\cdot |\Delta_1|\geq 0$, we have
\begin{equation}\label{eqnmiyaokafibersurface}
\begin{aligned}
K_f^2 ~\leq&~ (2g-2)\big(2g(B)-2+|\Delta_1|\big)+ 3\delta(\Upsilon_2)-3\chit(D)-(2K_f+D)\cdot D\\
&+\frac{3\delta(\Upsilon_1)}{e^2}-\frac{(2g-2)|\Delta_1|}{e}.
\end{aligned}
\end{equation}
\end{lemma}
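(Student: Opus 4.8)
The plan is to reduce the inequality to a single application of Miyaoka's inequality (Theorem \ref{theoremmiyaoka}) after performing a suitable cyclic base change that "kills" the singular fibers over $\Delta_1$. First I would take a degree-$e$ cyclic cover $\pi:\,B'\to B$ totally ramified over $\Delta_1$ (and possibly over some extra general points, which is why the hypothesis $2g(B)-2+\frac{e-1}{e}|\Delta_1|\ge 0$ is imposed — it guarantees such a cover exists with $B'$ a smooth curve). Pulling $f$ back along $\pi$ and normalizing, one obtains a semi-stable fibration $f':\,X'\to B'$; the point of choosing the ramification to have order $e$ over each point of $\Delta_1$ is that, after base change and minimal desingularization, the fibers of $f'$ lying over $\Delta_1$ become \emph{smooth}, so the only singular fibers of $f'$ sit over $\pi^{-1}(\Delta_2)$, and each singular fiber of $f$ in $\Upsilon_2$ gives rise to $e$ disjoint copies upstairs with the same number of nodes. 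The branch points away from $\Delta_1 \cup \Delta_2$ do not affect the fibers (they are smooth fibers of $f$, remaining smooth).

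Next I would record how the relevant invariants transform. By the standard behaviour of the relative canonical sheaf under base change (and the fact that desingularization of $A_r$-singularities coming from semi-stable base change does not change $\omega_{X/B}^2$), one gets $K_{f'}^2 = e\,K_f^2$ and $\chit(\text{fiber-sum data})$ scaling by $e$ as well; concretely $\delta(\Upsilon_2') = e\,\delta(\Upsilon_2)$, while the fibers over $\Delta_1$ contribute $0$ to the singular count on $X'$. Likewise the base genus satisfies $2g(B')-2 = e(2g(B)-2) + (e-1)|\Delta_1| + (\text{extra ramification})$ by Riemann–Hurwitz. I would also pull $D$ back to a reduced normal crossing curve $D'\subseteq \Upsilon_2'$ with $\chit(D') = e\,\chit(D)$ and $(2K_{f'}+D')\cdot D' = e\,(2K_f+D)\cdot D$, using that $D$ avoids $\Delta_1$ and the $\pi$ is étale over a neighbourhood of $D$.

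Then I would apply the known canonical class inequality for semi-stable fibrations to $f'$ — i.e. Miyaoka's inequality \eqref{eqnmiyaoka} applied to $X'$ (minimal model) with the ADE curves being the $A_r$-configurations introduced by resolving the base-changed singularities over $\Delta_1$, together with the normal crossing divisor $D'$. The $m(A_r)$ terms produced by the $\Delta_1$-fibers, summed up, contribute the $\frac{3\delta(\Upsilon_1)}{e^2}$-type correction after one divides back by $e$; more precisely, a node of $f$ over $\Delta_1$ of "thickness" handled by an $e$-fold base change resolves into an $A_{e-1}$-chain, and $m(A_{e-1}) = 3e - \tfrac{3}{e}$, which is exactly the source of both the main term $(2g-2)|\Delta_1|$ (via $3e$, matched against the fiber contribution) and the error term $-\frac{(2g-2)|\Delta_1|}{e}$ and $\frac{3\delta(\Upsilon_1)}{e^2}$ after dividing the whole inequality by $e$. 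Carrying out this bookkeeping and dividing by $e$ yields \eqref{eqnmiyaokafibersurface}.

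The main obstacle is the precise accounting in the base change: one must be careful that (a) the $A_r$-curves arising over $\Delta_1$ are genuinely disjoint and disjoint from $D'$ (this uses that $D\subseteq\Upsilon_2$, so $D'$ lives over $\pi^{-1}(\Delta_2)$, away from the resolved locus), (b) the minimal model $X'$ indeed has $K_{X'}$ nef so that Theorem \ref{theoremmiyaoka} applies — this follows from non-isotriviality and $g\ge 2$, but should be justified — and (c) the contributions of nodes of the $\Delta_1$-fibers split correctly into the "integer part" absorbed by the clean term $(2g-2)|\Delta_1|$ versus the fractional $1/e$ and $1/e^2$ remainders. Everything else is a routine Riemann–Hurwitz and Noether's-formula computation.
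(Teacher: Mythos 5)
Your overall strategy---a base change ramified over $\Delta_1$, followed by Miyaoka's inequality on the pulled-back surface with the $A_{e-1}$-chains over $\Delta_1$ as the ADE curves and the pullback of $D$ as the normal crossing divisor---is exactly the paper's. But two steps as written would fail. First, the semi-stable fibers over $\Delta_1$ do \emph{not} become smooth after the base change: locally $xy=t$ becomes $xy=s^e$, an $A_{e-1}$ surface singularity whose resolution inserts a chain of $e-1$ rational $(-2)$-curves into the fiber, so each node over $\Delta_1$ turns into $e$ nodes and $\delta_{\tilde f}=\deg(\pi)\cdot\delta_f$ \emph{including} the $\Delta_1$-contribution. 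This is not cosmetic: the term $3\deg(\pi)\,\delta(\Upsilon_1)$ sits inside $3\chit(\wt X)$ on the right of Miyaoka's inequality and nearly cancels the sum $\deg(\pi)\,\delta(\Upsilon_1)\left(3-\frac{3}{e^2}\right)$ of the $m(A_{e-1})$-terms on the left, and it is precisely this near-cancellation that produces the remainder $\frac{3\delta(\Upsilon_1)}{e^2}$. Your claim that the $\Delta_1$-fibers contribute $0$ to the singular count is also inconsistent with your later use of the $A_{e-1}$-chains, which live inside those very fibers. Relatedly, the terms $(2g-2)|\Delta_1|$ and $-\frac{(2g-2)|\Delta_1|}{e}$ do not come from $m(A_{e-1})$ at all; they come from Riemann--Hurwitz on the base, via $4(g-1)\big(g(\wt B)-1\big)$ and the relations \eqref{eqnrelativeinvs2}.

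Second, a cyclic cover of degree $e$ totally ramified exactly over $\Delta_1$ and unramified elsewhere does not exist in general, and your fallback of allowing ``extra general branch points'' is not harmless: each extra branch point increases $2g(\wt B)-2$ and hence adds a strictly positive term to the right-hand side after dividing by the degree, so you would only prove a strictly weaker inequality than \eqref{eqnmiyaokafibersurface}. The paper avoids this via the Kodaira--Parshin construction, which produces a (non-cyclic) cover of degree $de$ ramified only over $\Delta_1$, with ramification index exactly $e$ at every preimage point; the final inequality is then obtained after dividing by $de$, not $e$. Finally, the hypothesis $2g(B)-2+\frac{e-1}{e}\cdot|\Delta_1|\geq 0$ is not about the existence of the cover; by Riemann--Hurwitz it says exactly that $g(\wt B)\geq 1$, which is what makes $\wt X$ minimal of general type and hence $K_{\wt X}$ nef (non-isotriviality and $g\geq 2$ alone do not suffice over a rational base), so that Theorem \ref{theoremmiyaoka} applies.
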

\begin{proof}
Let $\pi:\,\wt B \to B$ be a base change of degree $de$ totally ramified over $\Delta_1$ with ramification index $e$.
By Kodaira-Parshin construction (cf. \cite{vojta88}), such a base change exists.
Let $\tilde f:\, \wt X \to \wt B$ be the pull-back fibration,
i.e., it is the relatively minimal smooth model of the fiber-product $X\times_{B}\wt B \to \wt B$.
Let $\Pi: \wt X \to X$ the induced morphism.

$$\xymatrix{
\wt X\ar[r]\ar[dr]_-{\tilde f}\ar@/^5mm/"1,4"^-{\Pi} & X\times_{B}\wt B \ar[rr]\ar[d] && S\ar[d]^-f\\
&\wt B \ar[rr]^-{\pi} && B
}$$

By assumption, $g\big(\wt B\big)\geq 0$. Hence $\wt X$ is minimal of general type.
In particular $K_{\wt X}$ is nef.
Note also that for any node $q$ of $F$ with $F\in \Upsilon_1$,
$\Pi^{-1}(q)$ consists of $d$ disjoint curves of type $A_{e-1}$.
Now applying Theorem \ref{theoremmiyaoka} to $\wt X$ by taking all these inverse image of nodes
of $F$ with $F\in \Upsilon_1$ as ADE curves and $\wt D=\Pi^{-1}(D) \subseteq \wt X$
as the reduced normal crossing curve, one gets
\begin{equation}\label{eqnpfmiyaokafibersurface1}
\delta(\Upsilon_1)\cdot d\left(3e-\frac{3}{e}\right) \leq 3\big(\chit(\wt X)-\chit(\wt D)\big)-
\big(K_{\wt X}+\wt D\big)^2.
\end{equation}
Note that
$$2g(\wt B)-2=de\cdot \left(2g(B)-2+\frac{e-1}{e}\cdot |\Delta_1|\right),$$
$$\delta_{\tilde f}=de\cdot \delta_f,\quad K_{\tilde f}^2=de\cdot K_f^2,\quad \chit(\wt D)=de\cdot \chit(D).$$
Thus \eqref{eqnmiyaokafibersurface} follows from \eqref{eqnpfmiyaokafibersurface1}
together with \eqref{eqnrelativeinvs2}.
\end{proof}

\begin{remark}
Let $\Delta_1=\Delta$, $D=\emptyset$, and $e \rightarrow \infty$ in \eqref{eqnmiyaokafibersurface}.
Then one gets the classic canonical class inequality \cite{vojta88},
$$K_f^2 \leq (2g-2)\big(2g(B)-2+s\big), \qquad s=|\Delta|,$$
which is in fact a strict inequality proved earlier by the second named author \cite{Tan95} for $s\neq 0$ and by Liu \cite{liu96} for $s=0$.
\end{remark}

\begin{corollary}[{\cite{luzuo14}}]\label{corollarycanonical}
Let $f:\,X \to B$ be {as} in Lemma {\rm \ref{lemmamiyaokafibersurface}}.
Then for any $e\in \mathbb N^{*}$ satisfying $2g(B)-2+\frac{e-1}{e}\cdot s_{nc}\geq 0$, one has
\begin{equation}\label{eqncanonical}
K^2_f\leq (2g-2)(2b-2+s_{nc})+2\delta_1(\Upsilon_c)+3\delta_h(\Upsilon_c)+\frac{3\delta(\Upsilon_{nc})}{e^2}-\frac{(2g-2)s_{nc}}{e}.
\end{equation}
\end{corollary}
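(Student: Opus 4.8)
The plan is to apply Lemma~\ref{lemmamiyaokafibersurface} to the splitting $\Delta_1=\Delta_{nc}$, $\Delta_2=\Delta_{ct}$, so that $|\Delta_1|=s_{nc}$, $\Upsilon_1=\Upsilon_{nc}$, $\Upsilon_2=\Upsilon_{ct}$ and $g(B)=b$, and then to choose the auxiliary reduced normal crossing curve $D\subseteq\Upsilon_{ct}$ cleverly. The hypothesis $2b-2+\frac{e-1}{e}s_{nc}\ge0$ is exactly what is needed to invoke the lemma with this $\Delta_1$; doing so gives
\[
K_f^2\le(2g-2)(2b-2+s_{nc})+3\delta(\Upsilon_{ct})-\big(3\chit(D)+(2K_f+D)\cdot D\big)+\frac{3\delta(\Upsilon_{nc})}{e^2}-\frac{(2g-2)s_{nc}}{e}.
\]
Since $\delta_0(F)=0$ for every fiber $F$ with compact Jacobian, one has $\delta(F)=\delta_1(F)+\delta_h(F)$, hence $3\delta(\Upsilon_{ct})=3\delta_1(\Upsilon_{ct})+3\delta_h(\Upsilon_{ct})$. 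Therefore \eqref{eqncanonical} will follow provided $D$ can be chosen inside $\Upsilon_{ct}$ with
\[
3\chit(D)+(2K_f+D)\cdot D\ \ge\ \delta_1(\Upsilon_{ct}).
\]

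The second step is an elementary identity. For any subcurve $D$ of a semi-stable fiber, with $l$ irreducible components, $n$ internal nodes, $\mu$ nodes joining $D$ to the closure of its complement in that fiber, and geometric-genus sum $g_D=\sum_j g(\wt C_j)$, one has $D^2=-\mu$, $K_f\cdot D=K_X\cdot D$, $\chi(\calo_D)=l-g_D-n$ and $\chit(D)=2l-2g_D-n$; feeding these into the relation $2\chi(\calo_D)=-(D^2+K_X\cdot D)$ yields
\[
3\chit(D)+(2K_f+D)\cdot D\ =\ 2l-2g_D+n+\mu .
\]
As the right-hand side is additive over connected components and over distinct fibers, it suffices to produce, for each fiber $F$ with compact Jacobian, a subcurve $D_F\subseteq F$ with $2\,l(D_F)-2\,g_{D_F}+n(D_F)+\mu(D_F)\ge\delta_1(F)$, and then put $D=\bigsqcup_F D_F$.

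For the construction, fix such an $F$; its dual graph is a tree and all of its components are smooth. To each type-$1$ node $q$ of $F$ I would attach its genus-$1$ side $T_q$: a connected subcurve, whose dual graph is a subtree, with $p_a(T_q)=1$ (so $T_q$ contains exactly one component, of geometric genus $1$), meeting the closure of its complement in $F$ in the single point $q$. A short tree argument shows that the family $\{T_q\}$ is laminar, i.e.\ any two members are nested or disjoint; the only configuration that could violate this has both sides of $q$ of arithmetic genus $1$, which forces $g=2$ and is dealt with directly (then $F$ is a chain $E_1$--$R_1$--$\cdots$--$R_m$--$E_2$ of smooth curves with $\delta_1(F)=m+1$, and one takes $D_F=R_1\cup\cdots\cup R_m$ when $m\ge1$, resp.\ $D_F=E_1$ when $m=0$). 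Letting $D_F$ be the union of the maximal members of $\{T_q\}$ — pairwise disjoint by laminarity — the identity above gives, for each maximal $T_q$ (which has $g_{T_q}=1$, $n(T_q)=l(T_q)-1$, $\mu(T_q)=1$), the contribution $2\,l(T_q)-2+(l(T_q)-1)+1=3\,l(T_q)-2\ge l(T_q)$, whereas the number of type-$1$ nodes whose genus-$1$ side lies inside that $T_q$ is at most $l(T_q)$ (the $l(T_q)-1$ internal edges of the subtree $T_q$, plus its boundary edge $q$). Summing over the maximal tails gives $3\chit(D_F)+(2K_f+D_F)\cdot D_F\ge\delta_1(F)$, and summing over $F$ finishes the argument. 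The hard part is this last combinatorial estimate — checking that the genus-$1$ sides form a laminar family and bounding how many type-$1$ nodes a single maximal tail can ``absorb'', together with disposing of the degenerate case $g=2$; everything else is routine bookkeeping.
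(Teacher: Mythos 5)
Your proposal is correct and follows essentially the same route as the paper: apply Lemma~\ref{lemmamiyaokafibersurface} with $\Delta_1=\Delta_{nc}$ and take $D$ to be the union of the maximal elliptic (genus-one) tails in the compact-Jacobian fibers, reducing everything to $3\chit(D)+(2K_f+D)\cdot D\geq \delta_1(\Upsilon_{ct})$, verified tail-by-tail via $3l-2\geq l$. You are merely more explicit than the paper about the combinatorics (laminarity of the genus-one sides, the edge count $\delta_1(F)=\sum l(T)$, and the degenerate $g=2$ configuration), but the argument is the same.
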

\begin{proof}
For any $p\in \Delta_c$, let $D_p$ be the union of elliptic tails of $F_p$,
where a curve $D_1\subseteq F_p$ is called an elliptic tail if $F_p=D_1\cup D_2$, $D_1\cdot D_2=1$,
the geometrical genus $g(D_1)=1$, and the component $C$ of $D_2$ intersecting $D_1$ is not a rational curve
with $C^2=-2$.
It is easy to see that the number of irreducible component of $D_p$ is $l(D_p)=\delta_1(F_p)$.

Applying Lemma \ref{lemmamiyaokafibersurface} to the family $f:\,X \to B$ by setting $\Delta_1:=\Delta_{nc}$
and $D:=\bigcup\limits_{p\in \Delta_c} D_p,$
one obtains
\begin{equation*}
\begin{aligned}
K_f^2 ~\leq&~ (2g-2)(2b-2+s_{nc})+\frac{3\delta(\Upsilon_{nc})}{e^2}-\frac{(2g-2)s_{nc}}{e}\\
&+3\delta(\Upsilon_c)-3\chit(D)-(2K_f+D)\cdot D.
\end{aligned}
\end{equation*}
Since $\delta(\Upsilon_c)=\delta_1(\Upsilon_c)+\delta_h(\Upsilon_c)$ and $l(D_p)=\delta_1(F_p)$, it suffices to prove
\begin{equation}\label{eqnpfofcanonical1}
3\chit(D_p)+(2K_f+D_p)\cdot D_p\geq l(D_p),\qquad \forall~p\in \Delta_c.
\end{equation}
Since $F_p$ has a compact Jacobian, each connected component of $D_p$ consists of a chain of rational curves
plus one elliptic curves.
Let $C_{p}$ be such a connected component. Then
$\chit(C_p)=l(C_p)-1$, $K_f\cdot C_p=1$ and $C_p^2=-1$.
Hence
$$3\chit(C_p)+(2K_f+C_p)\cdot C_p=3l(C_p)-2 \geq l(C_p).$$
Therefore, \eqref{eqnpfofcanonical1} is proved; and hence the proof is complete.
\end{proof}

\subsection{Proof of Theorem \ref{mainthm1}}\label{sectionpfofthm1}
We prove Theorem \ref{mainthm1} by contradiction in the section.
Let $f:\,X \to \bbp^1$ be a non-isotrivial semi-stable fibration of genus $g\geq 2$
defined over $\mathbb C$.
According to \cite{V-Z04} or \cite{LTYZ}, it is known that $s_{nc} \geq 4$.
In order to derive a contradiction,
we assume that $g\geq 5$ and $s_{nc}=4$ in this section.

Let $q:=h^1(X,\mathcal {O}_X)$ be the irregularity of $X$.
By \cite[Corollary\,1.12]{LTYZ}, we have $q\leq1$ and
\begin{equation}\label{eqnpfofmain11}
\chi_f=g-q
\end{equation}
Applying Corollary \ref{corollarycanonical} to our family $f:\,X \to \mathbb P^1$,
one gets
\begin{equation}\label{eqnpfofmain12}
K^2_f\leq 4(g-1)+2\delta_1(\Upsilon_c)+3\delta_h(\Upsilon_c)+\frac{3\delta(\Upsilon_{nc})}{e^2}-\frac{8(g-1)}{e},
\qquad \forall~2\leq e\in \mathbb N^{*}.
\end{equation}
According to \cite[Theorem 4.2]{luzuo14}, which is essentially due to Moriwaki \cite[Theorem D]{mo98}, we obtain
\begin{equation}\label{eqnpfofmain13}
K^2_f\geq \frac{4(g-1)}{g}\cdot \chi_f + \frac{3g-4}{g}\delta_1(\Upsilon)+\frac{7g-16}{g}\delta_h(\Upsilon).
\end{equation}

Consider first the case $q=0$.
Note that $\delta_1(\Upsilon_c) \leq \delta_1(\Upsilon)$ and $\delta_h(\Upsilon_c)\leq \delta_h(\Upsilon)$ by the definition.
So it follows from \eqref{eqnpfofmain11} together with \eqref{eqnpfofmain12} and \eqref{eqnpfofmain13} that
\begin{equation}\label{eqnpfofmain14}
\frac{g-4}{g}\left(\delta_1(\Upsilon)+4\delta_h(\Upsilon)\right) \leq \frac{3\delta(\Upsilon_{nc})}{e^2}-\frac{8(g-1)}{e} <0,
 \quad \text{if $e$ is large enough}.
\end{equation}
Since both $\delta_1(\Upsilon)$ and $\delta_h(\Upsilon)$ are non-negative and $g\geq 5$,
we get a contradiction from \eqref{eqnpfofmain14} (Actually, we can still get a contradiction even if $g=4$ in the case).
Thus Theorem \ref{mainthm1} is proved for the case $q=0$.\vspace{0.1cm}

Now we may assume that $q=1$.
Similarly, by \eqref{eqnpfofmain11}, \eqref{eqnpfofmain12} and \eqref{eqnpfofmain13}, we obtain
\begin{equation}\label{eqnpfofmain15}
0\geq \frac{g-4}{g}\left(\delta_1(\Upsilon)+4\delta_h(\Upsilon)\right)-\frac{4(g-1)}{g}
+\frac{8(g-1)}{e} -\frac{3\delta(\Upsilon_{nc})}{e^2}.
\end{equation}
By \cite{Yu10}, the number of singular fibre $s\geq 6$ since $g\geq 5>2$.
Hence $\delta_1(\Upsilon)+\delta_h(\Upsilon)\geq 2$.
Thus $K_f^2 > 4(g-1)$ by \eqref{eqnpfofmain13}.
Combining this with Lemma \ref{lemmapfofmain11} blow, one gets $K_f^2\geq 5g-6+q=5(g-1)$.
Hence $\delta(\Upsilon_{nc}) \leq \delta_f=12\chi_f-K_f^2 \leq 7(g-1)$.
Letting $e=g$ in \eqref{eqnpfofmain15}, one gets
\begin{eqnarray*}
0&\geq&\frac{g-4}{g}\left(\delta_1(\Upsilon)+4\delta_h(\Upsilon)\right)+\frac{4(g-1)}{g}-\frac{3\delta(\Upsilon_{nc})}{g^2}\\
&\geq&\frac{2(g-4)}{g}+\frac{4(g-1)}{g}-\frac{21(g-1)}{g^2}=\frac{3(2g^2-11g+7)}{g^2},
\end{eqnarray*}
which is a contradiction since $g\geq 5$.
The proof is complete.
\qed

\begin{lemma}\label{lemmapfofmain11}
Let $f:\,X \to \bbp^1$ be a relatively minimal fibration defined over $\mathbb C$. Then
$K_f^2\geq 4(g-1)$; moreover if $K_f^2\neq 4(g-1)$, then $K_f^2\geq 5g-6+q$, where $q=h^1(X,\mathcal {O}_X)$ is
the irregularity of X.
\end{lemma}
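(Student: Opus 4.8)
The plan is to combine Xiao's slope inequality $K_f^2\ge\frac{4(g-1)}{g}\chi_f$ with Noether's formula \eqref{eqnnoetherformula}, the refined slope inequality \eqref{eqnpfofmain13}, and the Enriques--Kodaira classification of surfaces, reading everything through the relation $\chi_f=\chi(\mathcal O_X)+g-1$ that holds over $\mathbb P^1$ (cf. \eqref{eqnrelativeinvs2}), equivalently $p_g(X)=\chi_f-g+q$. Since $K_f^2=\chi_f=0$ for the trivial family $\mathbb P^1\times C$, I treat $f$ as non-isotrivial, as it is in every application of the lemma; then $\chi_f>0$. For the first assertion it then suffices, by the slope inequality, to prove $\chi_f\ge g$, i.e. $\chi(\mathcal O_X)\ge1$.

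To get $\chi(\mathcal O_X)\ge1$ I would argue by contradiction using the classification of surfaces. If $\chi(\mathcal O_X)<0$ then $\kappa(X)=-\infty$ and $X$ is birationally ruled over a curve $C$ with $g(C)=1-\chi(\mathcal O_X)\ge2$; using the ruling $h\colon X\to C$ (fiber $\ell$) and adjunction on a section of $h$, one gets $K_X\cdot F=(F\cdot\ell)\bigl(2g(C)-2\bigr)$ for any fiber $F$ of $f$, hence $(F\cdot\ell)\bigl(g(C)-1\bigr)=g-1$, so by Riemann--Hurwitz the induced degree-$(F\cdot\ell)$ morphism $F\to C$ is \'etale; since \'etale covers of the fixed curve $C$ of bounded degree form a discrete set while the fibers $F$ move in a connected family over $\mathbb P^1$, $f$ would be isotrivial, a contradiction. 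If $\chi(\mathcal O_X)=0$ then $\kappa(X)\le1$: the subcase of $X$ ruled over an elliptic curve is killed by the same adjunction identity (it forces $g=1$), a surface with $\kappa=0$ admits no fibration of fiber genus $\ge2$ at all, and a $\kappa(X)=1$ surface with $\chi(\mathcal O_X)=0$ is an isotrivial quasi-bundle, which carries no non-isotrivial genus-$g$ fibration over $\mathbb P^1$. Hence $\chi(\mathcal O_X)\ge1$, $\chi_f\ge g$, and $K_f^2\ge4(g-1)$.

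For the ``moreover'' part I would assume $K_f^2>4(g-1)$, so $\chi_f\ge g$ by the first part; the goal is to rule out every value of $K_f^2$ in the open range $\bigl(4(g-1),\,5g-6+q\bigr)$. I would do this by feeding the refined slope inequality \eqref{eqnpfofmain13} into Noether's formula $12\chi_f=K_f^2+\delta_f$ (with $\delta_f=\delta_0(\Upsilon)+\delta_1(\Upsilon)+\delta_h(\Upsilon)\ge0$), and, for the borderline situation $\chi_f=g$ --- so $\chi(\mathcal O_X)=1$ and $X$ is rational or of general type with $\chi(\mathcal O_X)=1$ --- by invoking Xiao's classification of fibered surfaces of small slope: there, $K_f^2=4(g-1)$ is realized only by the extremal hyperelliptic fibrations (for which $\delta_1(\Upsilon)=\delta_h(\Upsilon)=0$), while any other fibration with $\chi_f=g$ already has $K_f^2\ge5g-6$, a bound that is sharp, e.g. for suitable pencils of plane curves on rational surfaces. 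The main obstacle will be exactly this gap statement: closing it needs a fairly precise description of the fibrations whose slope is near Xiao's bound $\frac{4(g-1)}{g}$ over $\mathbb P^1$ and the elimination of all ``intermediate'' configurations (hyperelliptic of non-minimal type, trigonal, low Clifford index), for which \eqref{eqnpfofmain13} and the strong constraint $\chi(\mathcal O_X)=1$ on the total surface are the main levers.
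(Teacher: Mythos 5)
Your reduction of the first assertion to $\chi(\mathcal O_X)\ge 1$ (equivalently $\chi_f\ge g$) is a genuine gap: that inequality is simply false for relatively minimal non-isotrivial fibrations over $\mathbb P^1$. Take a generic pencil inside a polarization $|\Theta|$ with $\Theta^2=2g-2$ on an abelian surface and blow up the $2g-2$ base points: the exceptional curves become sections, so the induced genus-$g$ fibration $f:X\to\mathbb P^1$ is relatively minimal and (generically) non-isotrivial, yet $\chi(\mathcal O_X)=0$ and $\chi_f=g-1$. Here Xiao's slope inequality only gives $K_f^2\ge \frac{4(g-1)^2}{g}<4(g-1)$, so your strategy cannot reach the stated bound, while the lemma itself holds in this example ($K_f^2=6(g-1)$, since $K_X+F=\pi^*\Theta$). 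The precise point where your classification argument breaks is the claim that ``a surface with $\kappa=0$ admits no fibration of fiber genus $\ge2$ at all'': that uses $K_X\equiv 0$, which holds only for \emph{minimal} surfaces of Kodaira dimension $0$, whereas the lemma assumes only relative minimality of $f$, and the blown-up abelian (or, with $q=1$, bielliptic) surface is not minimal. Note also that the paper's own application of this lemma in Section \ref{sectionpfofthm1} is precisely to the case $q=1$, $\chi_f=g-1$, i.e.\ $\chi(\mathcal O_X)=0$, so any proof that forces $\chi(\mathcal O_X)\ge1$ is arguing against the intended use. Finally, for the ``moreover'' part you explicitly leave the key gap statement open, so that half is not proved either.

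The paper's actual argument is much more elementary and avoids slopes entirely: by \cite[Theorem 2.1]{TTZ05} the divisor $A:=K_X+F$ is nef for a relatively minimal fibration over $\mathbb P^1$, and $K_f^2=A^2+4(g-1)$, which gives the first bound at once (and works in the abelian-surface example above, where $A=\pi^*\Theta$). If $K_f^2>4(g-1)$ then $A$ is nef and big, so Mumford vanishing gives $H^i(K_X+A)=0$ for $i=1,2$, and Riemann--Roch applied to $K_X+A=2K_X+F$ yields $0\le h^0(K_X+A)=\chi(K_X+A)$, which unwinds to $K_f^2\ge 5g-6+q$. If you want to salvage your approach, you should replace the attempt to bound $\chi(\mathcal O_X)$ by the nefness of $K_X+F$; the two routes are not reconcilable, because the slope inequality genuinely loses the needed factor when $\chi_f<g$.
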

\begin{proof} (Zamora, Alexis G.)
The first statement is proved in \cite[Theorem 2.1]{TTZ05} based on the fact that $A:=K_X+F$ is nef,
where $F$ is a general fibre of $f$.

Note that $K_f^2=A^2+4(g-1)$.
Hence if $K_f^2\neq 4(g-1)$, then $A$ is nef and big, which implies $H^i(K_X+A)=0$ for $i=1,2$ by Mumford vanishing theorem.
Hence by Riemann-Roch theorem, one has
$$0\leq h^0(K_X+A)=\chi(K_X+A)=\chi(\mathcal O_X)+\frac12(K_X+A)\cdot A=K_f^2-(5g-6+q).
$$
This completes the proof.
\end{proof}

\section{Lower bound for $s_{nc}$ in the positive characteristic case}\label{sectionthm2}
The main purpose of the section is to prove Theorem \ref{thm2}.
Hence, we always assume that the characteristic of $k$ is positive.
First, we have the following easy lemma:
\begin{lemma}\label{lemma1}
Let $f:X \rightarrow \mathbb{P}^1 $ be a  non-trivial semi-stable fibration of genus $g\geq 1$ over an algebraically closed field $k$. Then
\begin{equation}\label{eq0}
b_2-\left(2+\sum\limits_{F\in\Upsilon}\big(l(F)-1\big)\right)=\left(g-\frac{b_1}{2}\right)\cdot(s_{nc}-4)- \sum\limits_{F\in\Upsilon_{nc}}\left(g(F)-\frac{b_1}{2}\right),
\end{equation}
where $l(F)$ is the number of irreducible components in $F$, and $b_1 , b_2$ are the first and second Betti numbers of $X$.
\end{lemma}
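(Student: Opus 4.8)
The plan is to play two computations of the topological Euler characteristic $\chit(X)$ against each other and then do a little bookkeeping with the genus formula \eqref{eqndelta(F)=}. Since $X$ is a smooth connected projective surface we have $b_0=b_4=1$ and $b_1=b_3$ by Poincar\'e duality, so $\chit(X)=\sum(-1)^ib_i=2-2b_1+b_2$; this is valid over an arbitrary algebraically closed field, the $b_i$ being $\ell$-adic Betti numbers, exactly as recalled in Section \ref{sectionpre}. On the other hand, the third line of \eqref{eqnrelativeinvs2} with $B=\bbp^1$, i.e. $b=0$, gives $\chit(X)=\delta_f-4(g-1)$. Equating the two expressions and solving for $b_2$ yields
\[
b_2=2b_1+2-4g+\delta_f .
\]

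Next I would expand $\delta_f=\sum_{F\in\Upsilon}\delta(F)$ by means of \eqref{eqndelta(F)=}, namely $\delta(F)=g-g(F)+l(F)-1$. Because $g(F)=g$ holds precisely for the singular fibres with compact Jacobian (equivalently $\delta_0(F)=0$), the summand $g-g(F)$ survives only over $\Upsilon_{nc}$, so
\[
\delta_f=\sum_{F\in\Upsilon}\bigl(l(F)-1\bigr)+\sum_{F\in\Upsilon_{nc}}\bigl(g-g(F)\bigr).
\]
Substituting this into the previous display and transposing the term $2+\sum_{F\in\Upsilon}(l(F)-1)$ to the left-hand side gives
\[
b_2-\Bigl(2+\sum_{F\in\Upsilon}(l(F)-1)\Bigr)=2b_1-4g+\sum_{F\in\Upsilon_{nc}}\bigl(g-g(F)\bigr).
\]

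It then remains only to observe that the right-hand side of \eqref{eq0} equals the right-hand side above. Since $|\Upsilon_{nc}|=s_{nc}$, expanding
\[
\Bigl(g-\tfrac{b_1}{2}\Bigr)(s_{nc}-4)-\sum_{F\in\Upsilon_{nc}}\Bigl(g(F)-\tfrac{b_1}{2}\Bigr)
\]
and cancelling the two $\tfrac{b_1}{2}s_{nc}$ contributions produces exactly $\sum_{F\in\Upsilon_{nc}}(g-g(F))-4g+2b_1$, which matches the previous display and finishes the proof.

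I do not expect a genuine obstacle here: once the Noether-type identities of Section \ref{sectionpre} and the formula \eqref{eqndelta(F)=} are in hand, the statement is a purely formal manipulation. The one point that deserves an explicit word is that everything invoked — \eqref{eqnrelativeinvs2}, the identity $\chit(X)=2-2b_1+b_2$, and Poincar\'e duality $b_i=b_{4-i}$ — remains valid in positive characteristic with Betti numbers taken in $\ell$-adic cohomology, so that the argument is characteristic-free.
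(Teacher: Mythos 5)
Your proposal is correct and follows the paper's own argument exactly: both equate $\chit(X)=2-2b_1+b_2$ with the third line of \eqref{eqnrelativeinvs2} for $b=0$, expand $\delta_f$ via \eqref{eqndelta(F)=} using that $g-g(F)$ vanishes on $\Upsilon_{ct}$, and rearrange. The algebra checks out, and your remark that all ingredients are characteristic-free in $\ell$-adic cohomology is the right point to make explicit.
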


\begin{proof}
Combining \eqref{eqnrelativeinvs2} with \eqref{eqndelta(F)=}, one gets
\begin{displaymath}
2-2b_1+b_2+4g-4=\delta_f=\sum\limits_{F\in\Upsilon_{nc}}\big(g-g(F)\big)+\sum\limits_{F\in\Upsilon}\big(l(F)-1\big).
\end{displaymath}
By rearrangement, we obtain \eqref{eq0}.
\end{proof}

Before going further, let's recall some facts about the Albanese variety.
It is well-known that the Picard scheme $\Pic^0(P)$ and the Albanese variety $\text{Alb}(P)$ exist for any nonsingular projective variety $P$.
For the construction, we refer to \cite[\S\,5]{Ba01}.
We just remark here that
\begin{enumerate}
\item $\text{Alb}(C)$ is isomorphic to the Jacobian $J(C)$ if $P=C$ is a curve;
\item the Zariski tangent space of $\Pic^0(P)$ at the origin is canonically isomorphic to $H^1(P,\mathcal O_P)$;
\item $\text{Alb}(P)$ is the dual abelian variety of $\Pic^0(P)_{red}$,
where $\Pic^0(P)_{red}$, called the (classical) Picard variety of $P$, is the the reduced part of $\Pic^0(P)$.
\end{enumerate}
In particular, one has
\begin{equation}
q:=\dim H^1(P,\mathcal O_P) \geq\frac{b_1}{2}=\text{dim\,} \text{Alb}(P)=\text{dim\,} {\rm Pic}^0(P).
\end{equation}
It is a little different from the characteristic zero case where we always have $q=\frac{b_1}{2}$.
In fact, there do exist surfaces in the positive characteristic case with $q>\frac{b_1}{2}$, see \cite{Ig55} and \cite{Se58a}.

The {following} lemma, whose proof is an analogue of \cite[Lemma\,4.1]{LTYZ},
is a generalization of \cite[Lemma\,1]{Be81} in characteristic zero case.
\begin{lemma}\label{lemma2}
Let $f:\,X \to \mathbb P^1$ be as in Lemma {\rm\ref{lemma1}}, $F$ any fiber of $f$, and $g(F)$. the geometrical genus of $F$.
Then $g(F)\geq \frac{b_1}{2}$.
\end{lemma}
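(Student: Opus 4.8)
The statement to prove is that for any fiber $F$ of the semi-stable fibration $f:X\to\mathbb P^1$, the geometric genus $g(F)$ is at least $b_1/2$, where $b_1$ is the first Betti number of $X$. The plan is to transport the problem from the total surface $X$ to a well-chosen curve using the Albanese map, exactly as in the characteristic zero argument of Beauville and its refinement in \cite[Lemma\,4.1]{LTYZ}. First I would consider the Albanese morphism $\alpha:X\to\mathrm{Alb}(X)$, a variety of dimension $b_1/2$ by the discussion just above. Since $B=\mathbb P^1$ has trivial Albanese, the composition with $f$ is constant, so $\alpha$ must factor through $f$ in the sense that each fiber is contracted to a point; equivalently, $\alpha$ restricted to a \emph{smooth} general fiber $F_0$ is the constant map. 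The key observation is then a rigidity/specialization statement: because a general fiber is contracted, the special fiber $F$ (and in fact every fiber) is also contracted by $\alpha$. One clean way to see this: the image $\alpha(X)$ has dimension $\le 1$; if it were a curve $C'$, then $f$ would factor (up to normalization) through the induced map $X\to C'$, and since $\mathbb P^1$ admits no nonconstant map to an abelian variety, $C'$ would have to be rational, forcing $\mathrm{Alb}(X)$ to contain a rational curve — impossible unless $\dim\mathrm{Alb}(X)=0$, in which case the lemma is trivial. Hence $\alpha$ is constant on $X$ whenever $b_1/2>0$, so every fiber $F$ is contracted to a point.

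Next I would use the contraction of $F$ to produce the desired map on normalizations. Let $\widetilde F=\coprod_i\widetilde\Gamma_i\to F$ be the normalization, so that by definition $g(F)=\sum_i g(\widetilde\Gamma_i)$. The map $\alpha|_F$ being constant does \emph{not} by itself give information about $g(F)$; the point of Beauville's argument is subtler. Instead one should deform: take a general fiber $F_0$, which is smooth of genus $g$, and consider the Abel--Jacobi/Albanese map $\mathrm{Alb}(F_0)=J(F_0)\to\mathrm{Alb}(X)$ induced by the inclusion $F_0\hookrightarrow X$. As $F_0$ varies in the base $\mathbb P^1$, all these fibers are algebraically equivalent, so the induced maps $J(F_0)\to\mathrm{Alb}(X)$ are ``the same'' up to translation; since $\alpha(F_0)$ is a point, the composite $J(F_0)\to\mathrm{Alb}(X)$ is the zero map. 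Passing to the limit $F_0\leadsto F$ and using the semi-stable reduction (the relative Jacobian extends to a semi-abelian scheme over $\mathbb P^1$ whose special fiber has abelian part of dimension $g(F)$), one obtains a surjection from the abelian part $J(F)^{\mathrm{ab}}$ — of dimension $g(F)$ — onto a subvariety of $\mathrm{Alb}(X)$ whose dimension, I would argue, is forced to be all of $b_1/2$. Concretely: the Leray spectral sequence for $f$, or simply the fact that $H^1(X,\mathbb Q_\ell)$ injects into $H^1(F,\mathbb Q_\ell)$ for any fiber via restriction (this uses that $R^1f_*\mathbb Q_\ell$ has no global sections supported at a point, a consequence of $\pi_1(\mathbb P^1\setminus\Delta)$ acting with the monodromy-invariant part controlling $H^1(X)$), gives $b_1(X)\le b_1(F)$; and for a semi-stable curve $F$ one has $b_1(F)=2g(F)+(\text{number of loops in the dual graph})$... which is \emph{too big}, so this crude bound is insufficient and I must instead use the Hodge/weight filtration: $H^1(X,\mathbb Q_\ell)$ is pure of weight $1$, hence lands in the weight-$1$ part $W_1H^1(F,\mathbb Q_\ell)$, which has dimension exactly $2g(F)$. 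This yields $b_1(X)\le 2g(F)$, i.e. $g(F)\ge b_1/2$, which is the claim.

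Let me restate the cleanest route: (1) identify $H^1_{\text{\'et}}(X,\mathbb Q_\ell)$ with the monodromy invariants $H^0(\mathbb P^1, R^1f_*\mathbb Q_\ell)$ up to the contribution of the base, which is trivial since $\mathbb P^1$ is simply connected and $H^2(\mathbb P^1)\otimes H^0(F)$ contributes to $H^2(X)$ not $H^1(X)$; (2) for the specific fiber $F$, restriction $H^1(X,\mathbb Q_\ell)\to H^1(F,\mathbb Q_\ell)$ is injective because any invariant class is detected on a single fiber (cohomology is constructible and the invariants inject into any stalk); (3) invoke purity: $X$ smooth projective forces $H^1(X)$ pure of weight $1$, so its image lies in $W_1H^1(F)$; (4) for a nodal curve $F$, $\dim W_1H^1(F,\mathbb Q_\ell)=2\sum_i g(\widetilde\Gamma_i)=2g(F)$ (the weight-$0$ part is the graph cohomology and carries the ``loop'' classes). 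Combining, $b_1(X)\le 2g(F)$. The main obstacle is step (2)--(3): making the injectivity of restriction precise (it needs that the local system $R^1f_*\mathbb Q_\ell$ restricted away from $\Delta$ has its invariants embedding into the fiber at the chosen point, plus a comparison between the invariants and $H^1(X)$ that is exact rather than just a subquotient) and correctly pinning down the weight filtration on $H^1$ of a \emph{reducible} nodal curve in mixed characteristic — the characteristic-zero source \cite{LTYZ} can lean on Hodge theory, whereas here one should cite the weight spectral sequence for semi-stable degenerations / the theory of \'etale cohomology of nodal curves (e.g. via the normalization sequence $0\to\mathbb Q_\ell\to\bigoplus_i (\nu_i)_*\mathbb Q_\ell\to\mathbb Q_\ell^{(\#\text{nodes})}\to\ldots$). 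Once these inputs are in place the inequality $g(F)\ge b_1/2$ drops out with no further computation.
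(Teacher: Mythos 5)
Your final cohomological route is workable in outline, but two of its steps are stated incorrectly, and the paper's own argument is considerably more elementary. First, your opening claim that the Albanese map $\alpha:X\to\mathrm{Alb}(X)$ contracts the fibers (and is even constant whenever $b_1>0$) is false: the triviality of $\mathrm{Alb}(\mathbb P^1)$ does not force $\alpha|_{F}$ to be constant, and indeed the paper's concluding Proposition shows that when $g=b_1/2$ the Albanese image is a \emph{surface}. The correct use of rigidity --- and this is the paper's entire proof --- is to quotient first: let $\beta:J(\widetilde F)\to\mathrm{Alb}(X)$ be induced by the normalization $\widetilde F\to X$ and set $A=\mathrm{Alb}(X)/\mathrm{Im}\,\beta$; the induced map $\overline\alpha:X\to A$ contracts $F$ by construction, hence by the rigidity theorem contracts every fiber, hence factors through $\mathbb P^1$, hence is constant because an abelian variety contains no rational curve. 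Therefore $\dim A=0$, i.e.\ $\beta$ is surjective, and $g(F)=\dim J(\widetilde F)\geq\dim\mathrm{Alb}(X)=b_1/2$. No Leray spectral sequence, purity, or weight filtration is needed.

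Second, in your weight argument the assertion that $W_1H^1(F,\mathbb Q_\ell)$ has dimension $2g(F)$ is wrong: since $H^1$ of a proper variety has weights $\leq 1$, one has $W_1H^1(F)=H^1(F)$, whose dimension is $2g(F)+(\text{number of loops in the dual graph})$ --- exactly the ``too big'' bound you were trying to avoid. What you need is that a \emph{pure} weight-one subspace meets $W_0H^1(F)$ trivially and therefore injects into $\mathrm{Gr}^W_1H^1(F)\cong H^1(\widetilde F,\mathbb Q_\ell)$, which does have dimension $2g(F)$. With that correction, plus the injectivity of $H^0(\mathbb P^1,R^1f_*\mathbb Q_\ell)\to H^1(F_p,\mathbb Q_\ell)$ (which requires $H^1(F_q)\hookrightarrow H^1(F_{\bar\eta})^{I_q}$ at every $q\in\Delta$, true for semistable fibers) and a spreading-out argument to make sense of weights over an arbitrary algebraically closed field of positive characteristic, your route does close; it is essentially the $\ell$-adic dual of the paper's surjectivity of $J(\widetilde F)\to\mathrm{Alb}(X)$, obtained at a much higher technical cost.
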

\begin{proof}
Let $\widetilde{F}$ be the normalization of $F$ and $\wt F \to X$ the induced morphism.
By the universal property of the Albanese variety, we have a natural map $\beta: J(\widetilde{F})\to {\rm Alb}(X)$.
Denote by $A={\rm \text{Alb}}(X)/{\rm Im}\beta$ the quotient abelian variety, and $\overline{\alpha}:X\rightarrow A$ the induced map.
$\overline{\alpha}(F)$ is a point in $A$ since $J(\widetilde{F})$ maps to a point in $A$.
Therefore,  by the rigidity theorem, $\overline{\alpha}$ contracts all fibers of $f$.
So $\overline{\alpha}$ factors through $f$.
\begin{equation*}
\raisebox{15mm}{\xymatrix{
X\ar[r]^{\overline{\alpha}}\ar[d]_{f} & A   \\
\mathbb{P}^1\ar[ur]_{\varphi} &
}}
\end{equation*}
Since the image of $X$ in \text{Alb}$(X)$ generates \text{Alb}$(X)$,
the image of $X$ in $A$ generates $A$, i.e., $A$ is generated by $\bar\alpha(X)=\varphi(\mathbb P^1)$.
However, it is well-known that an abelian variety cannot contain any rational curves.
Hence $\varphi(\mathbb P^1)$ is a point, which implies $\dim A=0$.
Equivalently, $\beta: J(\widetilde{F})\to {\rm Alb}(X)$ is surjective.
Therefore, $g(F)=\dim J(\widetilde{F}) \geq \frac{b_1}{2} =\dim {\rm Alb}(X)$ as required.
\end{proof}

Now we are able to prove Theorem \ref{thm2}.
\begin{proof}[{\it Proof of Theorem {\rm \ref{thm2}}}]
Let $\rho$ be the rank of the N\'{e}ron-Severi group $NS(X)$.
Then we have $b_2\geq \rho$ by Igusa \cite{Ig60} and $\rho \geq \left(2+\sum\limits_{F\in\Upsilon}\big(l(F)-1\big)\right)$ by Beauville \cite{Be81}.
Combining these with Lemmas \ref{lemma1} and \ref{lemma2}, we get
\begin{equation}\label{eqnpfofthm21}
\left(g-\frac{b_1}{2}\right)\cdot(s_{nc}-4)\geq \sum\limits_{F\in\Upsilon_{nc}}\left(g(F)-\frac{b_1}{2}\right)\geq 0.
\end{equation}
Since the relative Jacobian of $f$ is non-smooth, $\Upsilon_{nc}\neq \emptyset$.
By Lemma \ref{lemma2}, one has $g>g(F)\geq \frac{b_1}{2}$ for any fiber $F\in\Upsilon_{nc}$.
Therefore, it follows from \eqref{eqnpfofthm21} that $s_{nc}\geq 4$.
\end{proof}

Recall that an algebraic surface $X$ is said to be Shioda-supersingular if $b_2=\rho$.
The above proof also shows the following proposition.
\begin{proposition}
Let $f:\,X \to \mathbb P^1$ be as in Lemma {\rm\ref{lemma1}}. Then either $s_{nc}\geq 4$ or $g=\frac{b_1}{2}$. Moreover,\vspace{0.1cm}

{\rm (i)}
if $s_{nc}=4$, then $X$ is Shioda-supersingular, $g(F)=\frac{b_1}{2}$ for $F\in\Upsilon_{nc}$, and
\begin{equation}\label{eqnprop1}
b_2=2+\sum_{F\in\Upsilon}(l(F)-1);
\end{equation}

{\rm (ii)} if $g=\frac{b_1}{2}$, then $X$ is Shioda-supersingular, the relative Jacobian of $f$ is a non-trivial smooth family of abelian varieties over $\mathbb P^1$,
and \eqref{eqnprop1} holds.
The image of the Albanese map of $X$ is a surface if moreover $g\geq 2$.
\end{proposition}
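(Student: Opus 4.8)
The plan is to read everything off from Lemma~\ref{lemma1}, fed with the two inequalities already used in the proof of Theorem~\ref{thm2}. First I would apply Lemma~\ref{lemma2} to a general (hence smooth) fiber to obtain $g=g(F)\ge\frac{b_1}{2}$, so that $g-\frac{b_1}{2}\ge 0$; and, exactly as in~\eqref{eqnpfofthm21}, combining $b_2\ge\rho\ge 2+\sum_{F\in\Upsilon}(l(F)-1)$ (Igusa~\cite{Ig60} and Beauville~\cite{Be81}) with Lemmas~\ref{lemma1} and~\ref{lemma2} yields $\left(g-\frac{b_1}{2}\right)(s_{nc}-4)\ge\sum_{F\in\Upsilon_{nc}}\left(g(F)-\frac{b_1}{2}\right)\ge 0$. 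Hence $g>\frac{b_1}{2}$ forces $s_{nc}\ge 4$, while otherwise $g=\frac{b_1}{2}$; this is the stated dichotomy. In either of the two boundary cases the term $\left(g-\frac{b_1}{2}\right)(s_{nc}-4)$ in~\eqref{eq0} vanishes, so that the left-hand side of~\eqref{eq0}, which is $\ge 0$, equals $-\sum_{F\in\Upsilon_{nc}}\left(g(F)-\frac{b_1}{2}\right)$, which is $\le 0$ by Lemma~\ref{lemma2}; everything therefore collapses into the equalities $b_2=\rho=2+\sum_{F\in\Upsilon}(l(F)-1)$ and $\sum_{F\in\Upsilon_{nc}}\left(g(F)-\frac{b_1}{2}\right)=0$.

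For (i) I would set $s_{nc}=4$: the collapse above immediately gives $b_2=\rho$ (so $X$ is Shioda-supersingular), the identity~\eqref{eqnprop1}, and — since every summand is non-negative by Lemma~\ref{lemma2} — $g(F)=\frac{b_1}{2}$ for each $F\in\Upsilon_{nc}$. For the first half of (ii) I would set $g=\frac{b_1}{2}$: the same collapse gives~\eqref{eqnprop1}, $X$ Shioda-supersingular, and $g(F)=\frac{b_1}{2}=g$ for every $F\in\Upsilon_{nc}$. But $g(F)=g$ means that $F$ has compact Jacobian (by the discussion following~\eqref{eqndelta(F)=}), so $\Upsilon_{nc}=\emptyset$; equivalently the relative Jacobian of $f$ is smooth over $\mathbb P^1$, hence an abelian scheme, and it is non-trivial since $f$ is (otherwise $f$ would be isotrivial, by Torelli for $g\ge 2$ and directly for $g=1$).

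The remaining and hardest point will be to prove that, when $g=\frac{b_1}{2}\ge 2$, the image of the Albanese map $\alpha\colon X\to\text{Alb}(X)$ is a surface. Since $\dim\text{Alb}(X)=\frac{b_1}{2}=g\ge 2$ the map $\alpha$ is non-constant, so if its image is not a surface it is a curve $C_0$; taking the Stein factorization $X\xrightarrow{h}C_1\to C_0$ with $C_1$ smooth, the induced map $\text{Alb}(C_1)\to\text{Alb}(X)$ is surjective (as $C_1\to C_0$ is surjective and $C_0$ generates $\text{Alb}(X)$), so $g(C_1)\ge\dim\text{Alb}(X)=g\ge 2$. If $h$ contracted a general fiber of $f$, it would factor through $f$ and produce a non-constant morphism $\mathbb P^1\to C_1$ — impossible as $g(C_1)\ge 2$; hence $h|_F\colon F\to C_1$ is non-constant for a general fiber $F$, and $g=g(F)\ge g(C_1)\ge g$ forces $g(F)=g(C_1)=g$. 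Since a non-constant morphism between smooth projective curves of the same genus $\ge 2$ over the algebraically closed field $k$ is either an isomorphism or purely inseparable — and in the latter case source and target remain abstractly isomorphic, $k$ being perfect — it follows that $F\cong C_1$ for a general fiber, so all general fibers of $f$ are isomorphic and $f$ is isotrivial, contradicting the hypothesis; therefore $\dim\alpha(X)=2$. I expect the delicate steps to be this ``$h$ contracts the general fiber or not'' alternative and, in positive characteristic, the passage from $g(F)=g(C_1)$ to $F\cong C_1$; everything else is a formal consequence of Lemmas~\ref{lemma1},~\ref{lemma2} and the Igusa--Beauville bounds $b_2\ge\rho\ge 2+\sum_{F\in\Upsilon}(l(F)-1)$.
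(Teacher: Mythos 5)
Your proposal is correct and follows essentially the same route as the paper: the dichotomy and parts (i)--(ii) are read off from the collapse of \eqref{eqnpfofthm21} and \eqref{eq0} exactly as in the paper, and the Albanese step likewise reduces to a surjection from a general fiber onto a curve of genus $g$, factored into a purely inseparable part followed by a separable part, with non-isotriviality and Hurwitz giving the contradiction. The only cosmetic difference is that you reach that curve via Stein factorization and the surjectivity of $J(C_1)\to\mathrm{Alb}(X)$, whereas the paper directly takes $C=\mathrm{Im}(\alpha)$ and uses that it is smooth of genus $\dim\mathrm{Alb}(X)$; both are standard and equivalent here.
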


\begin{proof}
It follows from \eqref{eqnpfofthm21} that $s_{nc}\geq 4$ or $g=\frac{b_1}{2}$.
And if either $s_{nc}=4$ or $g=\frac{b_1}{2}$, one has
$$b_2=\rho=2+\sum_{F\in\Upsilon}(l(F)-1).$$
Hence $X$ is Shioda-supersingular and \eqref{eqnprop1} holds.
It follows also from the proof of Theorem \ref{thm2} that the relative Jacobian of $f$ is smooth if $g=\frac{b_1}{2}$,
and $g(F)=\frac{b_1}{2}$ for $F\in\Upsilon_{nc}$ if $s_{nc}=4$,
due to the fact that $g\geq g(F)\geq \frac{b_1}{2}$ for any fiber $F$ by Lemma \ref{lemma2}.
It remains to prove that if $g=\frac{b_1}{2}$, then the image of the Albanese map is a surface.

Assmue that $g=\frac{b_1}{2}$, and let $\alpha:\, X \rightarrow \text{Alb}(X)$ be the Albanese map and $F$ be a general fiber of $f$.
Suppose that ${\rm Im}(\alpha)=C$ is a curve. Then $C$ is smooth of genus $g(C)=\dim \text{Alb}(X)=g$. Consider the following diagram.
$$\xymatrix@M=0.15cm{
F \ar@{^(->}[rr] \ar[drr]_{\pi} &&X \ar[rr]^{(\alpha,f)} \ar[d]^{\alpha}&& C \times \mathbb P^1\\
&&C &&
}$$
It is clear that $\pi:\,F \to C$ is surjective.
Note that $\pi$ is a composition of a purely inseparated morphism $\pi_1:\,F \to F'$ and a separated morphism $\pi_2:\,F' \to C$;
moreover, $F'$ is isomorphic to $F$ as abstract schemes and $\pi_1$ is a composition of Frobenius morphisms (cf. \cite[\S\,IV-2]{Ha77}).
Since $f$ is non-isotrivial, one obtains that $\deg \pi_2 =d \geq 2$.
Thus
$$2g-2 =2g(F')-2\geq d\cdot (2g(C)-2)\geq 2(2g-2),$$
which is a contradiction, since $g\geq 2$. The proof is complete.
\end{proof}

\begin{corollary}
Let $f:\,X \to \mathbb P^1$ be as in Lemma {\rm\ref{lemma1}}. If $X$ is not Shioda-supersingular, then $s\geq s_{nc}\geq5$.
\end{corollary}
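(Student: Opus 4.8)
The plan is to obtain this as an immediate formal consequence of the Proposition proved just above, so that essentially no new work is needed. Recall that, by definition, $X$ is Shioda-supersingular exactly when $b_2=\rho$ (with $\rho=\rank NS(X)$), so the hypothesis of the Corollary is the strict inequality $b_2>\rho$; recall also that $s\geq s_{nc}$ always, since $\Upsilon_{nc}\subseteq\Upsilon$. Hence it suffices to show $s_{nc}\geq5$.

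First I would invoke the Proposition, which gives the dichotomy ``$s_{nc}\geq4$ or $g=\frac{b_1}{2}$'' together with its two refinements: ``$s_{nc}=4\Rightarrow X$ is Shioda-supersingular'' and ``$g=\frac{b_1}{2}\Rightarrow X$ is Shioda-supersingular''. Taking contrapositives of these two refinements under the standing hypothesis that $X$ is \emph{not} Shioda-supersingular, one gets $s_{nc}\neq4$ and $g\neq\frac{b_1}{2}$. The exclusion $g\neq\frac{b_1}{2}$ forces the first alternative of the dichotomy, i.e.\ $s_{nc}\geq4$; combined with $s_{nc}\neq4$ this yields $s_{nc}\geq5$, and therefore $s\geq s_{nc}\geq5$, as claimed.

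I do not expect a genuine obstacle here, since all the content is already packaged in the Proposition; the only point needing a little care is that \emph{both} ``$X$ is Shioda-supersingular'' conclusions get used — one to rule out $g=\frac{b_1}{2}$ and hence extract $s_{nc}\geq4$ from the dichotomy, the other to upgrade $s_{nc}\geq4$ to $s_{nc}\geq5$. If one prefers an argument that does not cite the Proposition, one can instead rerun the chain \eqref{eqnpfofthm21} keeping track of strictness: non Shioda-supersingularity gives $b_2>\rho\geq 2+\sum_{F\in\Upsilon}\big(l(F)-1\big)$, so by Lemma \ref{lemma1} and Lemma \ref{lemma2} (the latter giving $g(F)\geq\frac{b_1}{2}$ for every fiber) one obtains $\big(g-\frac{b_1}{2}\big)(s_{nc}-4)>0$; in particular $g\neq\frac{b_1}{2}$, and since Lemma \ref{lemma2} applied to a smooth fibre gives $g\geq\frac{b_1}{2}$, we conclude $g>\frac{b_1}{2}$, whence $s_{nc}-4>0$, i.e.\ $s_{nc}\geq5$.
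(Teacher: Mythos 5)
Your proposal is correct and is exactly the argument the paper intends: the corollary is stated immediately after the Proposition with no separate proof, precisely because it follows formally from the dichotomy ``$s_{nc}\geq4$ or $g=\frac{b_1}{2}$'' together with the two ``Shioda-supersingular'' conclusions, as you lay out. Your alternative direct rerun of \eqref{eqnpfofthm21} with strict inequality is also a valid (and equivalent) way to see it.
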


\vspace{0.2cm}
\noindent{\it Acknowledgements}:
We would like to thank Abolfazl Mohajer for his interest and a careful reading.

\end{document}